\newcolumntype{Y}{>{\centering\arraybackslash}X}
\numberwithin{table}{section}
\numberwithin{figure}{section}
\numberwithin{equation}{section}%
\newtheorem{theorem}{Theorem}[section]
\newtheorem{proposition}{Proposition}[section]
\newtheorem{definition}{Definition}
\newtheorem{lemma}[theorem]{Lemma}
\newtheorem{problem}{Problem}[section]
\newtheorem{sht}{Test}[section]
\newtheorem{assumption}{Assumption}[section]
\newenvironment{proof}[1][Proof]{\noindent \textbf{#1.} }{\hfill$\Box$\par\medskip}
\newcommand{\beqn}[1]{\begin{equation}\label{#1}}
\newcommand{\eeqn}{\end{equation}}
\definecolor{darkgreen}{rgb}{0,0.6,0}
\definecolor{aau2}{rgb}{0.0, 0.5, 0.69}
\definecolor{aau3}{rgb}{0.0, 0.53, 0.74}
\definecolor{aau4}{rgb}{0.0, 0.48, 0.65}
\definecolor{aau5}{rgb}{0.0, 0.45, 0.73}
\definecolor{rsap}{RGB}{130, 36, 51}
\definecolor{gsap}{RGB}{112, 164, 137}
\definecolor{tud}{rgb}{0.43,0.73,0.11}
\definecolor{verde}{rgb}{0.33,0.53,0.11}
\definecolor{ttffqq}{rgb}{0.0, 0.48, 0.65} 
\definecolor{ffqqqq}{rgb}{0.0, 0.5, 0.69} 
\tikzstyle{decision} = [diamond, draw, fill=blue!20,
\tikzstyle{block} = [rectangle, draw, fill=blue!20,
\tikzstyle{line} = [draw, -latex']
\tikzstyle{cloud} = [draw, ellipse,fill=red!20, node distance=3cm,
\tikzstyle{cloud2} = [draw, ellipse,fill=green!20, node distance=3cm,
\begin{document}
	
\title{Sequential test sampling for stochastic derivative-free optimization}

	\author{
		A. Ding\thanks{Department of Industrial and Systems Engineering, Lehigh University, Bethlehem, PA 18015-1582, USA ({\tt and523@lehigh.edu}).}
		\and
		F. Rinaldi \thanks{Dipartimento di Matematica ``Tullio Levi-Civita'', Universit\`a
			di Padova, Italy
			(\tt{rinaldi@math.unipd.it}).}
	\and
	L. N. Vicente\thanks{Department of Industrial and Systems Engineering, Lehigh University, Bethlehem, PA 18015-1582, USA ({\tt lnv@lehigh.edu}).}
	}
	
	\maketitle

\begin{abstract}
In many derivative-free optimization algorithms, a sufficient decrease condition decides whether to accept a trial step in each iteration. This condition typically requires that the potential objective function value decrease of the trial step, 
i.e., the true reduction in the objective function value that would be achieved by moving from the current point to the trial point, be larger than a multiple of the squared stepsize.
When the objective function is stochastic, evaluating such a condition accurately can require a large estimation cost.

In this paper, we frame the evaluation of the sufficient decrease condition in a stochastic setting as a hypothesis test problem and solve it through a sequential hypothesis test. The two hypotheses considered in the problem correspond to accepting or rejecting the trial step. This test sequentially collects noisy sample observations of the potential decrease until their sum crosses either a lower or an upper boundary depending on the noise variance and the stepsize. 
When the noise of observations is Gaussian, we derive a novel sample size result, showing that the effort to evaluate the condition explicitly depends on the potential decrease, and that the sequential test terminates early whenever the sufficient decrease condition is away from satisfaction. Furthermore, when the potential decrease is~$\Theta(\delta^r)$ for some~$r\in(0,2]$, the expected sample size decreases from~$\Theta(\delta^{-4})$ to~$O(\delta^{-2-r})$.

We apply this sequential test sampling framework to probabilistic-descent direct search. To analyze its convergence rate, we extend a renewal-reward supermartingale-based convergence rate analysis framework to an arbitrary probability threshold. By doing so, we are able to show that probabilistic-descent direct search has an iteration complexity of $O(n/\epsilon^2)$ for gradient norm.
Our numerical experiments indicate the superiority of sequential hypothesis testing over fixed sampling when dealing with the evaluation of stochastic sufficient decrease conditions.
 \end{abstract}

\section{Introduction}\label{s1}
In this paper, we consider an unconstrained optimization problem of the form
\begin{align*}
    \min_{x\in\mathbb{R}^n}f(x),
\end{align*}
where the function values of the objective function $f:\mathbb{R}^n\rightarrow\mathbb{R}$ are not available directly. Instead, we have access to $F(x,\xi)$ as a noisy observation of $f(x)$, where $\xi$ is a random variable. We assume that the observed noise is unbiased
\begin{align}
    E_\xi[F(x,\xi)]=f(x).\label{eq1}
\end{align}
We also assume that the derivatives of $f$ are not available or that the cost of computing them is unaffordable. Furthermore, the objective function $f$ is assumed to be continuously differentiable and bounded from below by $f^*$. Its gradient $\nabla f$ is assumed to be $L_f$-Lipschitz continuous.
The non-availability of derivatives is a common scenario in many simulation-based optimization applications and is the main subject of derivative-free optimization (DFO). 
In DFO, the only information available about $f$ is through zeroth-order oracles (in our case, stochastic zeroth-order oracles), and the cost of querying the zeroth-order oracle is expensive. The goal of DFO is to achieve good solutions with few zeroth-order oracle queries.

Trust-region, direct-search, and line-search methods are three popular classes of algorithms in DFO. Trust-region methods build local surrogate models of $f$ and compute trial steps and decide its acceptance using those models, while direct search explores the space directly via a set of search directions or points, without explicit models. In the middle of the spectrum between with and without models, line-search methods approximate the steepest descent direction using finite differences and searches along it. 
For a more comprehensive understanding of these classes of DFO methods, interested readers are encouraged to consult sources such as, e.g., \cite{conn2009introduction,larson2019derivative}.
Most instances of these algorithms rely on a decrease condition to ensure that each step taken by the algorithm makes meaningful progress toward reducing the objective function.
In particular, probabilistic-descent direct search is a direct-search algorithm proposed in~\cite{gratton2015direct}, that instead of
using a positive spanning set (see, for example,~\cite{conn2009introduction}) to ensure a descent direction (which may require at least~$n+1$ function evaluations), 
incorporates randomness into the algorithm to obtain a descent direction probabilistically using only one point. It is shown in~\cite{gratton2015direct} that, when the objective function is deterministic, this algorithm has a zeroth-order oracle complexity of $O(n/\epsilon^2)$ for gradient norm. 
In this paper, we consider probabilistic-descent direct search for the purpose of applying sequential hypothesis testing to the evaluation of sufficient decrease conditions in stochastic DFO.

\subsection{A brief literature review of stochastic derivative-free optimization}\label{s11}

We start by giving a brief overview of the main results in the literature on stochastic derivative-free optimization. To our knowledge, they all require, under the standard assumption of noise   exhibiting finite variance, a sample size of $\Theta(\delta^{-4})$ function estimates per iteration, where $\delta$ is a stepsize or a trust-region radius (see, e.g., \cite{rinaldi2024stochastic} and references therein for further details on this matter).

A trust-region method is designed in~\cite{larson2016stochastic} to address optimization problems involving noisy objective functions. The authors establish convergence guarantees under conditions where the objective function $f$ exhibits adequate smoothness properties (specifically, possessing a Lipschitz continuous gradient) and when the noise is independently drawn from a distribution with zero mean and finite variance. ASTRO-DF, proposed in~\cite{shashaani2018astro} and refined in~\cite{ha2025iteration}, is an adaptive sampling trust-region method designed for objective functions that maintain Lipschitz continuous gradients and can be accessed through a Monte Carlo oracle. The framework in~\cite{shashaani2018astro} assumes noise that is independently distributed with zero mean, finite variance, and a bounded 4$\nu$th moment (where $\nu\geq 2$), and develops an almost sure convergence result.

Furthermore,~\cite{chen2018stochastic} provides another significant contribution by examining a trust-region algorithm for unconstrained stochastic optimization. This work focuses on random models derived from a smooth objective function using stochastic observations of either the function itself or its gradient. The convergence analysis and rates for such methodologies are further detailed in~\cite{blanchet2019convergence} using martingale theory, which is applicable to a wide class of stochastic algorithms including direct search. The theoretical frameworks developed in~\cite{blanchet2019convergence,cartis2018global,chen2018stochastic} represent extensions of the probabilistic trust-region DFO approach originally outlined in~\cite{bandeira2014convergence} for deterministic functions. Each of these trust-region algorithms requires functions to possess some level of smoothness (such as Lipschitz continuous gradients) and relies on the ability to construct probabilistically accurate gradient approximations.

A comprehensive review of stochastic direct-search variants is provided in the survey~\cite[Chapter 4]{dzahini2025direct} for both smooth and non-smooth objective functions. StoMADS, proposed in~\cite{audet2021stochastic}, is a stochastic variant of the mesh adaptive direct search (MADS) algorithm. StoMADS generates an asymptotically dense set of search directions and is proved in~\cite{audet2021stochastic} using martingale theory to converge to a Clarke stationary point of a locally Lipschitz continuous function with probability one. In another line of research, the work in~\cite{dzahini2022expected} considers stochastic direct-search methods of directional type and presents its convergence rate analysis by utilizing the supermartingale-based framework in~\cite{blanchet2019convergence}. In~\cite{rinaldi2024stochastic}, a new probabilistic tail-bound condition for function estimation is introduced under which stochastic direct-search and trust-region methods are shown to converge globally. Reduction in sample complexity is obtained under stronger assumptions than the standard finite noise variance. More specifically, 
under a bounded  $q/(1-q)$~moment assumption, using $c\delta^q$ with $q\in(1,2]$ as a threshold for decrease in the acceptance test, 
the authors give a $O(\delta^{-2q})$ sample complexity. Furthermore, under the assumption of using a common number generator framework and correlated errors (satisfied when, e.g., the noise is modeled as a Gaussian process), the authors give a $O(\delta^{2-2q})$ instead.  As noted in \cite[Remark 5.1]{rinaldi2024stochastic}, the improvement in the number of samples per iteration does not however necessarily lead to a reduction in the overall computational cost of the considered algorithmic frameworks. More specifically, using a decrease threshold of $c\delta^q$, while reducing the number of samples needed to certify a step, it increases the iteration complexity. In fact, in the case of smooth objectives with stochastic oracles, an iteration complexity of $O(n\epsilon^{-q/(q-1)})$ for gradient norm, with $q \in (1,2]$, was proved in \cite{dzahini2025direct} for a direct-search scheme similar to the one given in~\cite{rinaldi2024stochastic}. 

It is finally important to highlight that all the methods mentioned above are fixed-sampling schemes, which always pay the worst-case cost. This basically means that, in order to satisfy the assumptions needed for convergence, one always needs to take the prescribed number of samples, no matter how obvious the decision is.
In~\cite{achddou2024stochastic}, the authors gave a sequential sampling strategy for a stochastic direct-search scheme for which a sequential hypothesis test has also been given. They claim a $O ((\log T)^{\frac{2}{3}} T^{\frac{2}{3}}  )$ regret bound with respect to a sample budget $T$ for smooth and strongly convex objectives, which would translate, neglecting the polylog term, into an overall complexity of $O(n\epsilon^{-6})$ for the gradient norm.

\subsection{Our contribution}\label{s12}

In Section~\ref{s2}, 
we introduce a new way of testing the satisfaction of a sufficient decrease condition in stochastic derivative-free optimization by framing it as a hypothesis test problem and solving it through the means of a sequential hypothesis test. The test makes a decision between two hypotheses, essentially corresponding to accepting or rejecting the sufficient decrease condition, outputting the probabilities of making correct and incorrect decisions. For the purpose of establishing a standard non-convex iteration complexity result, such probabilities need to satisfy certain bounds dependent on the algorithmic stepsize to ensure enough correctness. Specifically, this test sequentially collects noisy observations of the potential decrease, 
by calling a zeroth-order oracle (at the current and trial points) until their sum crosses either a lower or an upper bound depending on the function variance and the stepsize. When the function noise is Gaussian, we show that the size of the sample required to estimate the decrease drops significantly when the potential decrease is far from a multiple of the square of the stepsize, in which case we observe an early termination of the test. Furthermore, when the potential decrease is $\Theta(\delta^r)$ for some~$r\in(0,2]$, we show that the expected sample size decreases from the known $\Theta(\delta^{-4})$ to $O(\delta^{-2-r})$.

In Section~\ref{s3}, 
we apply this sequential test sampling framework to probabilistic-descent direct search when the function is stochastic. We first show that the expected decrease of an auxiliary merit function is sufficiently large when compared with the square of the stepsize and that the stepsize does not approach zero at non-stationary points. We then extend an existing renewal-reward supermartingale-based convergence rate analysis framework to a general case where the probability defining the Bernoulli process in~(\ref{pdefinition}) is arbitrary. Finally, we conclude that the iteration complexity of probabilistic-descent direct search is $O(n/\epsilon^2)$. In Section~\ref{s4}, our numerical results indicate the superiority of the sequential hypothesis test against a fixed sample test in evaluating the sufficient decrease condition when the function is stochastic. 

The use of our sequential test sampling framework, guarantees a similar iteration complexity as the deterministic case while guaranteeing, under the Gaussian noise assumption, a reduced  sample complexity with respect to the standard sample complexity obtained considering the  finite variance noise assumption. Furthermore, the sequential test sampling guarantees more flexibility than fixed-sampling schemes. In fixed-sample approaches, such as those considered in~\cite{rinaldi2024stochastic}, the same order of samples must indeed be taken at every iteration to satisfy the required convergence conditions, even when the trial step is clearly acceptable or clearly unacceptable, thus leading to potentially waste in terms of function evaluations. On the other side, the sequential test dynamically adapts the number of samples to the given scenario. Hence, when the decrease is clearly above or below the acceptance threshold, the test terminates after generating just a small number of samples. This way of adapting the number of generated samples gives a reduction of the expected per-iteration cost and provides a more efficient alternative to fixed-sampling strategies that always require, as mentioned before, the worst-case sample complexity.

\section{Sequential hypothesis testing framework for stochastic DFO}\label{s2}

A hypothesis test problem is a statistical framework used to decide between two competing hypotheses about a population based on observed samples. Two hypotheses are called the null hypothesis $H_0$ and the alternative hypothesis $H_1$. This problem is widely investigated in statistical inference.

Sequential hypothesis testing is a statistical method that uses sequential hypothesis tests to solve hypothesis test problems. It has a long history, which, according to its pioneer Wald~\cite{wald1992sequential}, may date back to the work~\cite{dodge1929method} of Dodge and Romig in 1929. Sigmund~\cite{siegmund2013sequential} points out that sequential hypothesis testing was developed to solve hypothesis test problems more efficiently, which also happens to be our goal.

In fact, many nonlinear optimization algorithms accept steps at a given iteration based on the satisfaction of a decrease condition on the value of the objective function. In derivative-free optimization, such a condition consists of imposing a simple decrease or a sufficient decrease on the objective function related to the size of a step. We are going to apply sequential hypothesis testing to enhance the satisfaction of the sufficient decrease condition when the function is stochastic.

\subsection{Testing a sufficient decrease condition}\label{s21}

Denote the current iterate of a DFO algorithm by $x$, the current stepsize by $\delta$, and the current candidate point by $x+\delta d$, where $d$ is a certain direction. Suppose that the candidate point $x+\delta d$ is accepted if the sufficient decrease condition
\begin{align}
    f(x)-f(x+\delta d)-c\delta^2\geq0\label{eq2}
\end{align}
is satisfied. Our goal is to reformulate its evaluation as a hypothesis test problem which can be used by the algorithm at stake.

For this purpose, denote by $F(x,\xi^x)$ one random observation of $f(x)$ and by $F(x+\delta d,\xi^d)$ one random observation of $f(x+\delta d)$. The notation $\xi^x$ and $\xi^d$ is used to clarify that $F(x,\xi^x)$ and $F(x+\delta d,\xi^d)$ are sampled independently. Let us define a random variable
\begin{align}\label{ydefinition}
    Y=c\delta^2-(F(x,\xi^x)-F(x+\delta d,\xi^d)).
\end{align}
We have from the unbiased noise assumption~(\ref{eq1}) that
\begin{align}
    E[Y]=c\delta^2-\left(f(x)-f(x+\delta d)\right).\label{eq3}
\end{align}
Since we do not know whether the sufficient decrease condition (\ref{eq2}) is satisfied, it follows from (\ref{eq3}) that two hypotheses to be considered in an algorithm are
\begin{align*}
    H_0:E[Y]\leq 0\\
    H_1:E[Y]> 0.
\end{align*}
Now it becomes clear that our hypothesis test problem is to decide whether the mean of a random variable is positive or not through observations. Since we are not directly interested in the value of $E[Y]$, our hypothesis test problem is different from a parameter estimation problem. A very accurate estimate of $E[Y]$ can be a burden when $E[Y]$ is far from 0. If the first few observations tell us that $E[Y]$ may be far from 0, then we may not want to obtain a very accurate estimate of it.

\subsection{The hypothesis test problem and sequential hypothesis tests}\label{s22}
To illustrate how to test the sufficient decrease condition in a sequential hypothesis testing framework, we first state our hypothesis test problem.

\begin{problem}[Hypothesis test problem]\label{Subp}
Let $Y$ be a random variable. The mean of the random variable $Y$ is denoted by $\mu\in\mathbb{R}$ and is unknown. The hypothesis test problem is to decide between two hypotheses
\begin{align*}
    H_0:\mu\leq0\\
    H_1:\mu>0
\end{align*}
on the basis of $m$ independent observations $Y^1,\ldots,Y^m$ drawn from $Y$, where $m$ is a random variable called stopping rule and defined on every sample $\omega=(Y^1,Y^2,\ldots)$.
\end{problem}

Then we give the definition of a sequential hypothesis test, which is adopted from~\cite{wald1948optimum}.

\begin{definition}[Sequential hypothesis test]
A sequential hypothesis test consists of a stopping rule $m(\omega)$ and a decision rule to decide $H_0$ or $H_1$ in a hypothesis test problem.
\end{definition}

A sequential hypothesis test is usually expected to end with a finite number of observations almost surely. We give the following definition with respect to this property.
\begin{definition}
    We say that a sequential hypothesis test ends properly if
\begin{align}
    P(m(\omega)<\infty)=1.\label{eq6}
\end{align}
\end{definition}

Finally, for Problem~\ref{Subp}, we give the following definition regarding the accuracy property of a given sequential hypothesis test.

\begin{definition}[$C$-accurate sequential hypothesis test]\label{Q}
For Problem~\ref{Subp}, we say that a sequential hypothesis test is $C$-accurate if its error probabilities satisfy
\begin{align}
    P(\mbox{$H_1$ is accepted}\,|\,\mu\leq0)&\leq \frac{1}{2}\label{eq7}\\
    P(\mbox{$H_0$ is accepted}\,|\,\mu> 0)&\leq\frac{C}{\mu}.\label{eq8}
\end{align}
\end{definition}

Two important properties of a sequential hypothesis test are its probability of accepting each hypothesis and its expected number of observations used or expected sample size. For $j=0,1$, we define that the acceptance region $S_j=\{\omega:\mbox{$H_j$ is accepted}\}$ of a sequential hypothesis test is the sample set where $H_j$ is accepted. We denote the expected sample size by $E_\mu[m]=E[m|\mu]$ and the probability of accepting $H_j$ by $P_\mu(S_j)=P(S_j|\mu)$ for $j=0,1$, when the mean of $Y$ has the value of $\mu$. If $\mu>0$, then $P_\mu(S_0)$ is the error probability in (\ref{eq8}). Similarly $P_\mu(S_1)$ is the error probability in (\ref{eq7}) when $\mu\leq0$.

A $C$-accurate sequential hypothesis test draws inferences of the mean of a random variable and delivers accuracy conditions (\ref{eq7}) and (\ref{eq8}) for its error probabilities. Conditions (\ref{eq7}) and (\ref{eq8}) represent a certain level of accuracy requirement for the error of the solution of Problem \ref{Subp} and will be used in Section~\ref{s3} to prove a convergence rate or complexity result.

\subsection{The proposed sequential hypothesis test}\label{s23}
Now we propose a sequential hypothesis test (see Test~\ref{ST}) for Problem~\ref{Subp} and study its properties.
\begin{sht}[A sequential hypothesis test]\label{ST}
\begin{algorithm}[H]
\begin{algorithmic}[]
  \STATE
  \STATE Specify $\{a_l\}$ and $\{b_l\}$ such that $a_l,b_l\in[-\infty,\infty]$, and $a_l\geq b_l$ for each $l$.
  \STATE \textbf{Repeat} for $l=1,2,\ldots$
  \STATE \quad Draw a new~i.i.d.~observation $Y^l$ from $Y$.
  \STATE \textbf{Until} $\sum_{i=1}^{l}Y^i\geq a_l$ or $\sum_{i=1}^{l}Y^i\leq b_l$.
  \STATE Record the number of used samples with $m(\omega)=l$.
  \STATE Decide that $H_0$ is true if $\sum_{i=1}^{m}Y^i\leq b_m$.
  \STATE Decide that $H_1$ is true if $\sum_{i=1}^{m}Y^i\geq a_m$.
\end{algorithmic}
\end{algorithm}
\end{sht}
Therefore, Test~\ref{ST} continues to draw observations from $Y$ until one of the two termination conditions is satisfied. To check whether Test~\ref{ST} fits the definition of a sequential hypothesis test, we notice that the stopping rule in Test~\ref{ST} is
\begin{align*}
    m(\omega)=\inf\{l\geq1:\sum_{i=1}^{l}Y^i\geq a_l\ \mbox{or} \sum_{i=1}^{l}Y^i\leq b_l\}.
\end{align*}
Then Test~\ref{ST} decides that $H_0$ or $H_1$ is accepted based on which termination condition is satisfied. Specifically, it decides that $H_0$ is true if $\sum_{i=1}^{m}Y_i$ is smaller than $b_m$ and that $H_1$ is true if $\sum_{i=1}^{m}Y_i$ is larger than $a_m$.

To see the generality of Test~\ref{ST}, we notice that a sampling procedure with a predetermined fixed sample size at each iteration, which is what most stochastic DFO algorithms employ, is also an instance of Test~\ref{ST}. We describe such a sampling procedure in the following test.
\begin{sht}[A test with fixed sample size $m$]\label{FT}
\begin{algorithm}[H]
\begin{algorithmic}[]
  \STATE
  \STATE Draw $m$~i.i.d.~observation $Y^1,Y^2,\ldots,Y^m$ from $Y$.
  \STATE Decide that $H_0$ is true if $\sum_{i=1}^{m}Y^i\leq 0$.
  \STATE Decide that $H_1$ is true if $\sum_{i=1}^{m}Y^i>0$.
\end{algorithmic}
\end{algorithm}
\end{sht}
We can easily check that Test~\ref{FT} is an instance of Test~\ref{ST}, by choosing the parameters in Test~\ref{FT} as $a_l=\infty$ and $b_l=-\infty$ for $l<m$ and $a_m=b_m=0$. Test~\ref{FT} is usually referred to as a fixed sample size hypothesis test in the literature of statistics. It can be shown through Markov's inequality that Test~\ref{FT} delivers the accuracy condition~(\ref{eq8}) when at least $m=\sigma^2C^{-2}$ samples are used. When $C$ is $\Theta(\delta^2)$, which is commonly required in stochastic DFO, Test~\ref{FT} requires a sample set of size~$\Theta(\delta^{-4})$, which is known to be what most stochastic DFO algorithms need per iteration for standard convergence and convergence rates. This may further explain that most stochastic DFO algorithms using Test~\ref{FT} require a sample size of $\Theta(\delta^{-4})$ per iteration.

To make sure that condition~(\ref{eq6}) holds and Test~\ref{ST} ends properly, we need to choose $\{a_l\}$ and $\{b_l\}$ such that
\begin{align}
    P(\{b_l\leq\sum_{i=1}^{l}Y_i\leq a_l,\ \forall l\})=0.\label{eq!}
\end{align}
Notice that $\sum_{i=1}^{l}Y_i$ is a one-dimensional random walk with~i.i.d.~increments. If $\{a_l\}$ and $\{b_l\}$ are two bounded sequences and $Y$ is not almost surely zero, then condition (\ref{eq!}) holds, and hence also~(\ref{eq6}).

Test~\ref{ST} is $C$-accurate if we choose the appropriate parameters $\{a_l\}$ and $\{b_l\}$ such that conditions (\ref{eq7}) and (\ref{eq8}) hold. To investigate when condition (\ref{eq7}) holds, the following lemma tells us that $P_\mu(S_j)$ of any Test~\ref{ST} is monotone with respect to $\mu$. The proof consists of two steps. In the first step, given any $\Delta\mu>0$ and any sequential test Test~\ref{ST}, we define an ancillary sequential hypothesis test, called~Test~\ref{ST}*, such that $P_{\mu+\Delta\mu}(S_j)=P_{\mu}(\Tilde{S}_j)$, where $S_j$ and $\Tilde{S}_j$ are the acceptance regions of $H_j$ in Test~\ref{ST} and Test~\ref{ST}* accordingly. In the second step, we note that Test~\ref{ST}* makes each sample harder for $H_0$ and easier for $H_1$ than Test~\ref{ST}, and therefore, that $P_\mu(\Tilde{S}_0)\leq P_\mu(S_0)$ and $P_\mu(\Tilde{S}_1)\geq P_\mu(S_1)$.

\begin{lemma}\label{L21}
    Consider Problem \ref{Subp} and a sequential test Test~\ref{ST}, whose acceptance regions are $S_j$, for $j=0,1$. Then $P_\mu(S_0)$ is non-increasing with respect to $\mu$ and $P_\mu(S_1)$ is non-decreasing with respect to $\mu$.
\end{lemma}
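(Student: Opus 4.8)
The plan is to follow the two-step coupling sketched before the statement, making the shifted-boundary construction precise. Throughout I model the dependence on the mean as a pure location shift: writing each observation as $Y=\mu+Z$, where $Z$ is centered with a fixed distribution. This is exactly the structure of~(\ref{ydefinition}), where only the deterministic term $c\delta^2-(f(x)-f(x+\delta d))$ carries the mean, while the noises $F(x,\xi^x)-f(x)$ and $F(x+\delta d,\xi^d)-f(x+\delta d)$ have fixed laws; it is also what makes $P_\mu$ well defined as $\mu$ varies. Fix $\Delta\mu>0$. Running Test~\ref{ST} on observations of mean $\mu+\Delta\mu$ produces partial sums $\sum_{i=1}^{l}(\mu+\Delta\mu+Z^i)=\sum_{i=1}^{l}(\mu+Z^i)+l\,\Delta\mu$, so comparing this sum against $a_l$ and $b_l$ is identical to comparing the mean-$\mu$ partial sum $\sum_{i=1}^{l}(\mu+Z^i)$ against the lowered thresholds $\tilde a_l=a_l-l\,\Delta\mu$ and $\tilde b_l=b_l-l\,\Delta\mu$. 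I therefore define Test~\ref{ST}$*$ to be Test~\ref{ST} run on mean-$\mu$ observations with boundaries $\{\tilde a_l\}$ and $\{\tilde b_l\}$ (note that $\tilde a_l\geq\tilde b_l$ is preserved), with acceptance regions $\tilde S_j$. This construction yields the first-step identity $P_{\mu+\Delta\mu}(S_j)=P_\mu(\tilde S_j)$ for $j=0,1$.

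The second step is a pathwise coupling of Test~\ref{ST} and Test~\ref{ST}$*$ on one and the same realization of the walk $S_l=\sum_{i=1}^{l}(\mu+Z^i)$. Since $\Delta\mu>0$, both boundaries are strictly lowered, $\tilde a_l<a_l$ and $\tilde b_l<b_l$, so the upper boundary becomes easier to reach and the lower boundary harder, and I will establish the acceptance-region inclusions $S_1\subseteq\tilde S_1$ and $\tilde S_0\subseteq S_0$. For the first, suppose Test~\ref{ST} accepts $H_1$ at its stopping time $l^*$, i.e.\ $S_{l^*}\geq a_{l^*}$ while $b_l<S_l<a_l$ strictly for every $l<l^*$. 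Then $S_l>b_l>\tilde b_l$ for each such $l$, and also $S_{l^*}\geq a_{l^*}>\tilde a_{l^*}\geq\tilde b_{l^*}$, so Test~\ref{ST}$*$ never triggers its lower boundary up to and including $l^*$; since $S_{l^*}\geq a_{l^*}>\tilde a_{l^*}$, it must have hit its upper boundary at or before $l^*$, hence also accepts $H_1$. The inclusion $\tilde S_0\subseteq S_0$ is the mirror image: if Test~\ref{ST}$*$ accepts $H_0$ at $l^*$, then $S_l<\tilde a_l<a_l$ keeps Test~\ref{ST} away from its upper boundary, while $S_{l^*}\leq\tilde b_{l^*}<b_{l^*}$ forces Test~\ref{ST} to reach its lower boundary at or before $l^*$. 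These inclusions give $P_\mu(\tilde S_1)\geq P_\mu(S_1)$ and $P_\mu(\tilde S_0)\leq P_\mu(S_0)$.

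Combining the two steps yields $P_{\mu+\Delta\mu}(S_1)=P_\mu(\tilde S_1)\geq P_\mu(S_1)$ and $P_{\mu+\Delta\mu}(S_0)=P_\mu(\tilde S_0)\leq P_\mu(S_0)$; as $\Delta\mu>0$ is arbitrary, $P_\mu(S_1)$ is non-decreasing and $P_\mu(S_0)$ is non-increasing in $\mu$, which is the claim. I expect the delicate point to be the pathwise coupling in the second step rather than the shift identity: because lowering the boundaries changes the stopping time itself, one must argue simultaneously that the ``wrong'' boundary is never reached earlier and that the ``right'' boundary is reached no later. The strict inequalities $b_l<S_l<a_l$ that hold strictly before the stopping time are exactly what drive this, and one should also fix a tie-breaking convention for the degenerate case $a_l=b_l$ (and allow $\pm\infty$ boundaries) so that the decision rule of Test~\ref{ST} assigns each stopped path to precisely one of $S_0$ and $S_1$.
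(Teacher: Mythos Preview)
Your proposal is correct and follows essentially the same two-step approach as the paper: define the shifted-boundary test Test~\ref{ST}$*$ with thresholds $a_l-l\Delta\mu$, $b_l-l\Delta\mu$, use the location shift to get $P_{\mu+\Delta\mu}(S_j)=P_\mu(\tilde S_j)$, and then establish the inclusions $\tilde S_0\subseteq S_0$ and $S_1\subseteq\tilde S_1$. The paper asserts these inclusions in one line ``from the definition,'' whereas you spell out the pathwise coupling that justifies them despite the stopping times differing; your extra care there is warranted but does not constitute a different route.
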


\begin{proof}
    It suffices to prove $P_\mu(S_0)\geq P_{\mu+\Delta\mu}(S_0)$ and $P_\mu(S_1)\leq P_{\mu+\Delta\mu}(S_1)$ for any $\Delta\mu>0$. For the purpose of the proof, we define an ancillary sequential hypothesis test in the form of Test~\ref{ST}, called Test~\ref{ST}*, by selecting parameters $\{a_l-l\Delta\mu\}$ and $\{b_l-l\Delta\mu\}$, where $\{a_l\}$ and $\{b_l\}$ are the parameters of the given Test~\ref{ST}. Denote its acceptance region by $\Tilde{S}_j$ and its sample size by $\Tilde{m}$.

We first prove that $P_{\mu+\Delta\mu}(S_j)=P_{\mu}(\Tilde{S}_j)$. For each $\omega=(y_1,y_2,\ldots)$, we make a change of variable and define $\Tilde{\omega}=(\Tilde{y}_1,\Tilde{y}_2,\ldots)=(y_1-\Delta\mu,y_2-\Delta\mu,\ldots)$. From the definition of Test~\ref{ST}*, it follows that $m(\omega)=\Tilde{m}(\Tilde{\omega})$ and $\omega\in S_j$ if and only if $\Tilde{\omega}\in\Tilde{S}_j$. It also follows from this change of variable that $P_{\mu+\Delta\mu}(S_j)=P_{\mu}(\Tilde{S}_j)$.

It then suffices to prove that $P_\mu(\Tilde{S}_0)\leq P_\mu(S_0)$ and $P_\mu(\Tilde{S}_1)\geq P_\mu(S_1)$. From the definition of $S_0$ and $\Tilde{S}_0$, we have $\Tilde{S}_0\subseteq S_0$ and $P_\mu(\Tilde{S}_0)\leq P_\mu(S_0)$. Similarly, we have $\Tilde{S}_1\supseteq S_1$ and $P_\mu(\Tilde{S}_1)\geq P_\mu(S_1)$.
\end{proof}

Lemma \ref{L21} tells us that $P_\mu(S_1)$ is non-decreasing, which implies that the left-hand side of~(\ref{eq7}) is non-decreasing and admits its maximum when $\mu=0$. Hence to satisfy condition~(\ref{eq7}), it suffices to make sure that the maximum of the left-hand side of (\ref{eq7}) is no larger than $1/2$. We formalize this reasoning in the following lemma.

\begin{lemma}\label{L22}
    For any sequential test Test~\ref{ST}, one has that (\ref{eq7}) holds if and only if
\begin{align}
    P(\mbox{$H_1$ is accepted}\,|\,\mu=0)&\leq \frac{1}{2}.\label{CL2}
\end{align}
\end{lemma}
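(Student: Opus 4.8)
The plan is to prove the biconditional in Lemma~\ref{L22} by using the monotonicity established in Lemma~\ref{L21}. The key observation is that the left-hand side of~(\ref{eq7}) is exactly $P_\mu(S_1)$ for $\mu\leq 0$, since the event ``$H_1$ is accepted'' is the event $\omega\in S_1$, and the conditioning on $\mu$ fixes the mean of $Y$. Thus condition~(\ref{eq7}) can be rewritten as $\sup_{\mu\leq 0}P_\mu(S_1)\leq 1/2$, and~(\ref{CL2}) is precisely the statement $P_0(S_1)\leq 1/2$.

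The forward direction is immediate: if~(\ref{eq7}) holds for all $\mu\leq 0$, then in particular it holds at $\mu=0$, which is exactly~(\ref{CL2}). The substantive direction is the converse. First I would invoke Lemma~\ref{L21}, which tells us that $P_\mu(S_1)$ is non-decreasing in $\mu$. Consequently, for any $\mu\leq 0$ we have $P_\mu(S_1)\leq P_0(S_1)$. If~(\ref{CL2}) holds, i.e.\ $P_0(S_1)\leq 1/2$, then chaining these two inequalities gives $P_\mu(S_1)\leq 1/2$ for every $\mu\leq 0$, which is exactly condition~(\ref{eq7}). This completes the argument.

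The proof is short and poses no real obstacle, since the monotonicity has already been done in Lemma~\ref{L21}; the only care needed is the bookkeeping that identifies the left-hand side of~(\ref{eq7}) with the supremum over $\mu\leq 0$ of $P_\mu(S_1)$ and recognizes that this supremum is attained at the boundary point $\mu=0$ by monotonicity. One subtle point worth stating explicitly is that the supremum over the half-line $\mu\leq 0$ coincides with the value at the endpoint $\mu=0$: this follows because $P_\mu(S_1)$ is non-decreasing, so its largest value on $(-\infty,0]$ is its value at the right endpoint. No limiting or continuity argument is required, as $0$ belongs to the closed interval and the monotonicity inequality $P_\mu(S_1)\leq P_0(S_1)$ is valid for each individual $\mu\leq 0$.
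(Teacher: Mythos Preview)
Your proof is correct and follows essentially the same approach as the paper: both arguments identify the left-hand side of~(\ref{eq7}) with $P_\mu(S_1)$, invoke the monotonicity from Lemma~\ref{L21} to conclude the supremum over $\mu\leq 0$ is attained at $\mu=0$, and then read off the equivalence with~(\ref{CL2}). Your write-up is slightly more explicit about the trivial forward direction and about why no continuity argument is needed at the endpoint, but the substance is the same.
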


\begin{proof}
    We have, by definition, $P_\mu(S_1)=P(S_1\,|\,\mu)=P(\mbox{$H_1$ is accepted}\,|\,\mu)$. Then it follows from Lemma 1 that $P(\mbox{$H_1$ is accepted}\,|\,\mu)$ is non-decreasing with respect to $\mu$
\begin{align*}
    P(\mbox{$H_1$ is accepted}\,|\,\mu\leq0)\leq P(\mbox{$H_1$ is accepted}\,|\,\mu=0).
\end{align*}
So, as a function of $\mu$, $P(\mbox{$H_1$ is accepted}\,|\,\mu\leq0)$ reaches its maximum when $\mu=0$. Therefore,~(\ref{eq7}) holds if and only if $P(\mbox{$H_1$ is accepted}\,|\,\mu=0)\leq 1/2$.
\end{proof}

Lemma \ref{L22} gives us an equivalent condition (\ref{CL2}) for (\ref{eq7}) when Test~\ref{ST} is used. One way of ensuring (\ref{eq7}) through the satisfaction of (\ref{CL2}) is when the probability density function~$\phi_\mu(y)$ of $Y$ is symmetric and $a_l+b_l=0$ for each $l>0$.

\begin{lemma}\label{L23}
    Assume that $\phi_\mu(\mu+y)=\phi_\mu(\mu-y)$ for any $y$. Select $a_l$ and $b_l$ in Test~\ref{ST} such that $a_l+b_l=0$ holds for each $l>0$. Then (\ref{eq7}) holds.
\end{lemma}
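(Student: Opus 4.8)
The plan is to reduce to condition~(\ref{CL2}) via Lemma~\ref{L22} and then exploit a reflection symmetry of the sample space at $\mu=0$. By Lemma~\ref{L22}, it suffices to prove that $P(\mbox{$H_1$ is accepted}\,|\,\mu=0)=P_0(S_1)\leq 1/2$. Specializing the symmetry hypothesis $\phi_\mu(\mu+y)=\phi_\mu(\mu-y)$ to $\mu=0$ gives $\phi_0(y)=\phi_0(-y)$, so that $Y$ is symmetric about the origin. Since the observations are i.i.d., the law $P_0$ of the sequence $\omega=(Y^1,Y^2,\ldots)$ is then invariant under the coordinatewise reflection $R:\omega=(y_1,y_2,\ldots)\mapsto-\omega=(-y_1,-y_2,\ldots)$; that is, $P_0(R^{-1}A)=P_0(A)$ for every measurable $A$.

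Next I would show that, under the choice $b_l=-a_l$ (equivalently $a_l+b_l=0$), the reflection $R$ carries the acceptance region $S_1$ exactly onto $S_0$. Writing $S_l(\omega)=\sum_{i=1}^{l}y_i$, reflection sends $S_l(\omega)$ to $-S_l(\omega)$, so the stopping inequalities $S_l\geq a_l$ and $S_l\leq b_l$ become $-S_l\geq a_l$ and $-S_l\leq b_l$, i.e. $S_l\leq -a_l=b_l$ and $S_l\geq -b_l=a_l$. The pair of stopping conditions is therefore preserved, whence $m(R\omega)=m(\omega)$, while the roles of hitting the upper boundary and hitting the lower boundary are interchanged. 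Consequently $R$ maps $S_1$ onto $S_0$ (and conversely), up to the boundary event $\{S_m=a_m=b_m\}$, which has $P_0$-probability zero because $Y$, hence each partial sum $S_l$, has a continuous distribution.

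Combining the two ingredients gives $P_0(S_1)=P_0(R^{-1}S_0)=P_0(S_0)$. Since the decision rule assigns each terminating sample to exactly one of $S_0,S_1$ (again up to the null tie event), these sets are disjoint and $P_0(S_0)+P_0(S_1)=P_0(S_0\cup S_1)\leq 1$. Hence $2P_0(S_1)\leq 1$, i.e. $P_0(S_1)\leq 1/2$, which is precisely~(\ref{CL2}); Lemma~\ref{L22} then yields~(\ref{eq7}).

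The main obstacle I anticipate is making the reflection bijection between $S_0$ and $S_1$ fully rigorous at the level of sample paths, in particular verifying that the stopping time is genuinely invariant under $R$ and that the measure-zero boundary ties do not corrupt the set identity $R(S_1)=S_0$. Once the reflection is set up cleanly and the continuity of $Y$ is used to discard ties, the probabilistic conclusion is immediate; notably, it does not even require the test to terminate almost surely, since the single inequality $P_0(S_0)+P_0(S_1)\leq 1$ already suffices.
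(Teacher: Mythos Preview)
Your proposal is correct and follows the same approach as the paper: reduce to~(\ref{CL2}) via Lemma~\ref{L22}, then invoke the symmetry of the density at $\mu=0$ together with $a_l+b_l=0$ to conclude $P_0(S_1)\leq 1/2$. The paper's proof is essentially a one-line assertion of this symmetry consequence (in fact it claims $P_0(S_1)=1/2$, implicitly assuming proper termination), whereas you have spelled out the reflection argument on sample paths and been a bit more careful in only claiming $P_0(S_0)+P_0(S_1)\leq 1$; your added rigor is welcome but not a different route.
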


\begin{proof}
    The symmetry of the probability density function $\phi_\mu(y)$ and $a_l+b_l=0$ implies $P(\mbox{$H_1$ is accepted}\,|\,\mu=0)=1/2$. Then (\ref{CL2}) holds and condition~(\ref{eq7}) follows from Lemma \ref{L22}.
\end{proof}

Now we focus our attention on the satisfaction of condition (\ref{eq8}). When we use Test~\ref{ST} in the context of stochastic DFO algorithms, the value of $C$ in (\ref{eq8}) can be small, and so we need to investigate how large a sample size needs to be for the satisfaction of~(\ref{eq8}). We will show in the next subsection that, if $Y$ in Problem~\ref{Subp} follows a Gaussian distribution, we can select parameters in Test~\ref{ST} so that condition (\ref{eq8}) holds.

\subsection{Sample size in the Gaussian case}\label{s24}
In this subsection, we assume that $Y$ in Problem~\ref{Subp} follows a Gaussian distribution. Let $\phi_\mu(y)=(2\pi\sigma^2)^{-1/2}\exp{(-(y-\mu)^2/(2\sigma^2))}$ be the probability density function of $Y$. We notice that, when~$Y$ is Gaussian, Test~\ref{ST} with constant parameters is equivalent to a sequential probability ratio test in~\cite{wald1992sequential}. We will specify the parameters in Test~\ref{ST} so that Test~\ref{ST} is $C$-accurate. Then we approximate its expected number of observations $E_\mu[m]$.

We first select constant parameters in Test~\ref{ST} as $a_l=-b_l=c^0$ for each $l$, where $c^0>0$ is a real number. Lemma~\ref{L23} shows that condition~(\ref{eq7}) is valid. Furthermore, in the Gaussian case, Test~\ref{ST} is equivalent to a sequential probability ratio test in~\cite{wald1992sequential}, which has four real number parameters $A>1$, $0<B<1$, $\theta_0$, and $\theta_1>\theta_0$, given that our parameter satisfies $c^0=\sigma^2\log{A}/(\theta_1-\theta_0)$ and $-c^0=\sigma^2\log{B}/(\theta_1-\theta_0)$.

For this sequential probability ratio test, Wald bounded the left-hand side of condition (\ref{eq8}) in \cite[(3.42)]{wald1992sequential}, and it follows that
\begin{align*}
    P(\mbox{$H_0$ is accepted}\,|\,\mu> 0)\leq A^{-h},
\end{align*}
where $h$ denotes $2\mu/(\theta_1-\theta_0)$. We can use his result for our Test~\ref{Subp} because of the equivalence between the two tests. Since we have equivalently $c^0=\sigma^2\log{A}/(\theta_1-\theta_0)$, we can express $A$ with respect to $c^0$ as $A=\exp{(c^0(\theta_1-\theta_0)/\sigma^2)}$. After substituting $h$ and $A$, we have for Test~\ref{ST} that
\begin{align*}
    P(\mbox{$H_0$ is accepted}\,|\,\mu> 0)\leq e^{-\frac{2c^0}{\sigma^2}\mu}.
\end{align*}
It then follows from Proposition 2 in Appendix A with $x=\mu/C$ that, given $C>0$, we can select $c^0\geq\sigma^2/(2eC)$ so that for any $\mu>C$, we have $\exp(-2c^0\mu/\sigma^2)\leq C/\mu$. Therefore, by setting parameters~$a_l=-b_l=c^0\geq\sigma^2/(2eC)$ in Test~\ref{ST}, we can ensure that condition~(\ref{eq8}) holds and Test~\ref{ST} is $C$-accurate in the Gaussian case.

Calculating the exact value of the expected sample size is more elaborate. Wald~\cite{wald2004sequential} gave an approximate expected sample size in the Gaussian case. For parameters $A=\exp{(c^0(\theta_1-\theta_0)/\sigma^2)}$, $B=1/A$, and $c^0=\sigma^2/(2eC)$ in~\cite[(3:43)]{wald2004sequential} and in~\cite[(3:57)]{wald2004sequential}, the number is as follows
\begin{align}
    E_{\mu}[m]\approx\frac{\sigma^2}{2eC\mu}\frac{e^{\frac{\mu}{eC}}-1}{e^{\frac{\mu}{eC}}+1}.\label{eq9}
\end{align}
To help clarify it, it follows from Proposition 1 in Appendix A with $x=\mu/(eC)$ that we can bound (\ref{eq9}) as follows
\begin{align}
    \frac{\sigma^2}{2eC\mu}\frac{e^{\frac{\mu}{eC}}-1}{e^{\frac{\mu}{eC}}+1}\leq\frac{\sigma^2}{4e^2C^2}\min\left(1,\frac{eC}{|\mu|}\right).\label{eq10}
\end{align}

In the context of stochastic DFO algorithms, the level of accuracy is $C=s\delta^2$ (for some constant $s>0$) and $\mu=c\delta^2-\left(f(x)-f(x+\delta d)\right)$. After we substitute these values of $C$ and $\mu$ into~(\ref{eq10}), we obtain the following proposition.

\begin{proposition}
\label{prop}When evaluating the sufficient decrease condition (\ref{eq2}) in a stochastic DFO algorithm using Test~\ref{ST}, if $Y=c\delta^2-(F(x,\xi^x)-F(x+\delta d,\xi^d))$ is Gaussian with variance~$\sigma^2$ and $a_l=-b_l=\sigma^2/(2es\delta^2)$, then one needs an expected sample size approximately bounded by
\begin{align}
    \frac{\sigma^2}{4e^2s^2}\delta^{-4}\min\left(1,\frac{es\delta^2}{|c\delta^2-(f(x)-f(x+\delta d))|}\right)\label{eq27}
\end{align}
so that Test~\ref{ST} is $s\delta^2$-accurate.
\end{proposition}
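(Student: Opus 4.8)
The plan is to assemble the proposition from two ingredients already in place in Section~\ref{s24}: the $s\delta^2$-accuracy of Test~\ref{ST} under the prescribed boundaries, and Wald's approximate expected sample size in the Gaussian case. With both available, the remaining work is essentially a verification of the accuracy requirement for the specialized constant $C=s\delta^2$, followed by a direct substitution into the bound~(\ref{eq10}).

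First I would confirm that the test is $s\delta^2$-accurate. Condition~(\ref{eq7}) is immediate from Lemma~\ref{L23}: the Gaussian density is symmetric about its mean and the boundaries satisfy $a_l+b_l=0$. For condition~(\ref{eq8}) I would recall the Wald tail bound $P(\mbox{$H_0$ is accepted}\,|\,\mu>0)\leq e^{-2c^0\mu/\sigma^2}$ established just above the proposition, combined with Proposition~2 of Appendix~A. Taking $C=s\delta^2$, the sufficient condition $c^0\geq\sigma^2/(2eC)$ reads $c^0\geq\sigma^2/(2es\delta^2)$, and the prescribed choice $a_l=-b_l=\sigma^2/(2es\delta^2)$ meets it with equality; hence~(\ref{eq8}) holds with $C=s\delta^2$ and the test is $s\delta^2$-accurate.

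Next I would bound the expected sample size. Starting from Wald's approximation~(\ref{eq9}), valid for the constant boundaries $a_l=-b_l=c^0=\sigma^2/(2eC)$, I would apply the elementary inequality~(\ref{eq10}) to upper-bound its right-hand side by $\frac{\sigma^2}{4e^2C^2}\min(1,eC/|\mu|)$. Substituting $C=s\delta^2$ gives $\frac{\sigma^2}{4e^2C^2}=\frac{\sigma^2}{4e^2s^2}\delta^{-4}$, while $\mu=c\delta^2-(f(x)-f(x+\delta d))$ turns $eC/|\mu|$ into $es\delta^2/|c\delta^2-(f(x)-f(x+\delta d))|$; collecting these yields precisely~(\ref{eq27}).

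The main obstacle is conceptual rather than computational, and it is the source of the word ``approximately'' in the statement: formula~(\ref{eq9}) is Wald's approximation, which neglects the overshoot of the random walk $\sum_i Y^i$ beyond a boundary at the stopping time. I would not attempt to make this rigorous here, since~(\ref{eq9}) is quoted from~\cite{wald2004sequential}; the approximate character of the proposition is inherited directly from it. As a consistency check I would note that the right-hand side of~(\ref{eq9}) is even in $\mu$ (so the absolute value appearing in~(\ref{eq10}) and~(\ref{eq27}) is natural and covers both $\mu>0$ and $\mu\leq0$), and that, as $\mu\to0$, the expression tends to $\sigma^2/(4e^2C^2)$, matching the cap $\min(1,\cdot)=1$ in~(\ref{eq10}).
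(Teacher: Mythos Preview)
Your proposal is correct and follows essentially the same approach as the paper: the paper presents the proposition as the direct consequence of the preceding discussion in Section~\ref{s24}, obtained by substituting $C=s\delta^2$ and $\mu=c\delta^2-(f(x)-f(x+\delta d))$ into~(\ref{eq10}), with the accuracy established beforehand via Lemma~\ref{L23} for~(\ref{eq7}) and via Wald's tail bound together with Proposition~2 of Appendix~A for~(\ref{eq8}). Your additional remarks on the overshoot source of the approximation and the evenness/limit consistency checks are sound and go slightly beyond what the paper spells out.
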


Proposition~\ref{prop} gives us a novel sample size result, which explicitly depends on the potential decrease $f(x)-f(x+\delta d)$. Here we offer some interpretation and insight on the novel sample size quantity~(\ref{eq27}). Since $\min(a,b)$ is smaller than both  the first term $a$ and the second term $b$, it is clear from the first term that this quantity is $O(\delta^{-4})$. Therefore, the sample size in Test~\ref{ST} is at least not larger than in Test~\ref{FT} with respect to the power of $\delta$. From the second term, the quantity~(\ref{eq27}) is $O(\delta^{-2}/|c\delta^2-(f(x)-f(x+\delta d))|)$. In fact, when the potential decrease $f(x)-f(x+\delta d)$ is $\Theta(\delta)$, the quantity~(\ref{eq27}) drops to $O(\delta^{-3})$. Combining both terms, on the one hand, the quantity~(\ref{eq27}) achieves its maximum when~$f(x)-f(x+\delta d)$ is close to~$c\delta^2$, where it becomes difficult to distinguish the true hypothesis. On the other hand, Test~\ref{ST} stops early and this quantity decreases significantly whenever~$f(x)-f(x+\delta d)$ is far from $c\delta^2$, in which case the ratio~$es\delta^2/|c\delta^2-(f(x)-f(x+\delta d))|$ becomes small. This happens frequently when the algorithm is far from stationarity and~$f(x)-f(x+\delta d)$ is~$\Theta(\delta)$. Such an early termination effect and sample size advantage are commonly seen in the sequential hypothesis test theory~\cite{wald2004sequential,siegmund2013sequential}.

\section{Convergence rate of probabilistic-descent direct search}\label{s3}
In this section, to handle the stochastic setting, we propose a version of probabilistic-descent direct search in which its sufficient decrease condition is tested in a sequential hypothesis testing framework, through a sequential hypothesis test that ends properly and is $C$-accurate, where $C>0$ is given at the beginning of each algorithm iteration. Before we introduce the algorithm, we formally state the assumption that such a sequential hypothesis test can be developed for any given $C>0$.

\begin{assumption}\label{A1}
    For any $C>0$, there exists a $C$-accurate sequential hypothesis test for Problem~\ref{Subp} that ends properly.
\end{assumption}
For the rest of this section, we will assume that Assumption~\ref{A1} is satisfied. Probabilistic-descent direct search in stochastic setting is stated in Algorithm~\ref{SDS}. The value of $C$ is chosen as a multiple of $\Delta_k^2$, where $\Delta_k$ is the stepsize, so $C$ varies in each iteration and becomes small when $\Delta_k$ does. A $C_k$-accurate sequential hypothesis test will determine whether a candidate point is accepted (and the stepsize increased) or rejected (and the stepsize decreased).
\begin{algorithm}[H]
\caption{Probabilistic-descent direct search based on sequential hypothesis testing}\label{SDS}
\begin{algorithmic}[1]
\STATE{Initialization. Choose an initial point $x_0$, an initial stepsize $\delta_0$, $c>0$, $\theta\in(0,1)$, $\gamma\in(1,\infty)$.}
\FOR{$k=0,1,\cdots$}
\STATE{Uniformly select a random direction $D_k$ from the unit sphere.}
\STATE{The candidate point is then $X_k+\Delta_kD_k$. Perform a $c\Delta_k^2(1-\theta^2)/2(\gamma^2-\theta^2)$-accurate sequential hypothesis test for Problem~\ref{Subp} with the random variable $Y_k=c\Delta_k^2-(F(X_k,\xi^x)-F(X_k+\Delta_k D_k,\xi^d))$ (which requires estimating~$\sigma$). The result of this test declares either $H_0$ or $H_1$ accepted.}
\IF{$H_0$ is accepted,}
\STATE{The candidate point is accepted. Set $X_{k+1}=X_k+\Delta_kD_k$ and $\Delta_{k+1}=\gamma\Delta_{k}$.}
\ELSE
\STATE{The candidate point is rejected. Set $X_{k+1}=X_k$ and $\Delta_{k+1}=\theta\Delta_{k}$.}
\ENDIF
\ENDFOR
\end{algorithmic}
\end{algorithm}

We need to introduce some notation to develop a convergence rate for Algorithm \ref{SDS}. Let us denote the decision result of the sequential hypothesis test at iteration $k$ by
\begin{align*}
    A_k=\mathbf{1}\{\mbox{$H_0$ is accepted at iteration $k$}\}.
\end{align*}
In Algorithm \ref{SDS}, the directions $D_k$ and the decisions $A_k$ are random. Denote the probability space of Algorithm \ref{SDS} by $(\Omega,\mathcal{F},P)$. As a consequence of the randomness of $D_{k-1}$ and $A_{k-1}$, the current point $X_k$ and the stepsize $\Delta_k$ are also random quantities. Let $\Phi_k=f(X_k)-f^*+\eta\Delta_k^2$, where $\eta>0$ is a real number. To formalize conditioning on the past, let $\mathcal{F}_{k}$ denote the $\sigma$-algebra generated by $D_0,\ldots,D_{k-1}$ and $A_0,\ldots,A_{k-1}$ and let $\mathcal{F}_{k+1/2}$ denote the $\sigma$-algebra generated by $D_0,\ldots,D_k$ and $A_0,\ldots,A_{k-1}$. Under this definition of $\mathcal{F}_{k}$ and $\mathcal{F}_{k+1/2}$, we have the following conclusions: $\Delta_k$, $X_k$, and $\Phi_k$ are $\mathcal{F}_{k}$-measurable; $\Delta_k$, $X_k$, $\Phi_k$, and $D_k$ are $\mathcal{F}_{k+1/2}$-measurable; $\Delta_k$, $X_k$, $\Phi_k$, $D_k$, and $A_k$ are $\mathcal{F}_{k+1}$-measurable.

To analyze Algorithm \ref{SDS}, we start by noting that the unbiased noise Assumption~(\ref{eq1}) implies
\begin{align}
    E[Y_k|\mathcal{F}_{k+1/2}]=c\Delta_k^2-(f(X_k)-f(X_k+\Delta_k D_k)),\label{eq30}
\end{align}
where $Y_k$ was defined in Step 4 of Algorithm \ref{SDS}. Denote $S^1_k=\{\omega:f(X_k)-f(X_k+\Delta_k D_k)\geq c\Delta_k^2\}$ and $S^2_k=\{\omega:f(X_k)-f(X_k+\Delta_k D_k)< c\Delta_k^2\}$, which are two disjoint $\mathcal{F}_{k+1/2}$-measurable events. It follows from~(\ref{eq30}) that $S^1_k=\{\omega:E[Y_k]\leq0\}$ and $S^2_k=\{\omega:E[Y_k]>0\}$. We can then rewrite (\ref{eq7}) and (\ref{eq8}) in the context of Algorithm \ref{SDS} in the following way
\begin{align}
    P(A_k=0|\mathcal{F}_{k+1/2},S_k^1)&\leq \frac{1}{2}\label{eq31}\\
    P(A_k=1|\mathcal{F}_{k+1/2},S_k^2)&\leq\frac{c\Delta_k^2(1-\theta^2)/2(\gamma^2-\theta^2)}{c\Delta_k^2-(f(X)-f(X_k+\Delta_k D_k))}.\label{eq32}
\end{align}
Note that $E[Y_k]$ plays here the role of $\mu$ in (\ref{eq7}) and (\ref{eq8}).

Our main goal is to bound the number of iterations $T_\epsilon$ defined as follows
\begin{align}\label{T}
    T_\epsilon=\inf\{k\geq0:\|\nabla f(X_k)\|\leq\epsilon\}.
\end{align}
The first lemma tells us that the expected value decrease of $\Phi_k$ is bounded below by $\nu\Delta_k^2$, where $\nu>0$ is a real number.

\begin{lemma}\label{3L1}
    Let Assumption~\ref{A1} hold and $\Phi_k=f(X_k)-f^*+\eta\Delta_k^2$. Then there exist $\eta=\frac{c}{\gamma^2-\theta^2}>0$ and $\nu=\frac{c}{2}\frac{1-\theta^2}{\gamma^2-\theta^2}$ such that for all $k$ we have
\begin{align*}
    E[\Phi_k-\Phi_{k+1}|\mathcal{F}_{k}]\geq\nu\Delta^2_k.
\end{align*}
\end{lemma}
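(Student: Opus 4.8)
The plan is to expand $\Phi_k-\Phi_{k+1}$ over the two possible outcomes of iteration $k$, take a first conditional expectation given $\mathcal{F}_{k+1/2}$ (which fixes the direction $D_k$ but leaves the test decision $A_k$ random), and only afterwards average over $D_k$ via the tower property $E[\,\cdot\,|\,\mathcal{F}_k]=E[E[\,\cdot\,|\,\mathcal{F}_{k+1/2}]\,|\,\mathcal{F}_k]$. Writing $R=f(X_k)-f(X_k+\Delta_k D_k)$, which is $\mathcal{F}_{k+1/2}$-measurable, and $p=P(A_k=1\,|\,\mathcal{F}_{k+1/2})$, I would record that on $\{A_k=1\}$ the candidate is accepted, so $\Phi_k-\Phi_{k+1}=R+\eta\Delta_k^2(1-\gamma^2)$, whereas on $\{A_k=0\}$ it is rejected, so $\Phi_k-\Phi_{k+1}=\eta\Delta_k^2(1-\theta^2)$. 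Combining these with weight $p$ and collecting the $\eta$-terms gives $E[\Phi_k-\Phi_{k+1}\,|\,\mathcal{F}_{k+1/2}]=pR+\eta\Delta_k^2[(1-\theta^2)-p(\gamma^2-\theta^2)]$.

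The decisive simplification comes from the choice $\eta=c/(\gamma^2-\theta^2)$, for which $\eta(\gamma^2-\theta^2)=c$, so the expression collapses to $p(R-c\Delta_k^2)+\eta\Delta_k^2(1-\theta^2)=p(R-c\Delta_k^2)+2\nu\Delta_k^2$, using that $\eta(1-\theta^2)\Delta_k^2=2\nu\Delta_k^2$ by the definition of $\nu$. I would then split according to the $\mathcal{F}_{k+1/2}$-measurable events $S_k^1=\{R\geq c\Delta_k^2\}$ and $S_k^2=\{R<c\Delta_k^2\}$. On $S_k^1$ both factors of $p(R-c\Delta_k^2)$ are nonnegative, so this term drops and the conditional expectation is at least $2\nu\Delta_k^2$ without any further input.

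The only case requiring the accuracy guarantee is $S_k^2$, and this is where I expect the crux to sit. There $\mu:=c\Delta_k^2-R>0$, so $p(R-c\Delta_k^2)=-p\mu$ is a genuine loss: a false acceptance ($A_k=1$ while $H_1$ is true) can raise $f$. The point is that the per-iteration accuracy threshold $C=c\Delta_k^2(1-\theta^2)/(2(\gamma^2-\theta^2))$ was chosen to equal \emph{exactly} $\nu\Delta_k^2$, so the $C$-accuracy bound~(\ref{eq32}) reads $p\leq\nu\Delta_k^2/\mu$, whence $p\mu\leq\nu\Delta_k^2$ and $-p\mu\geq-\nu\Delta_k^2$. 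Thus on $S_k^2$ the conditional expectation is at least $-\nu\Delta_k^2+2\nu\Delta_k^2=\nu\Delta_k^2$. Combining the two cases yields $E[\Phi_k-\Phi_{k+1}\,|\,\mathcal{F}_{k+1/2}]\geq\nu\Delta_k^2$ almost surely, and applying $E[\,\cdot\,|\,\mathcal{F}_k]$ to both sides (with $\Delta_k$ being $\mathcal{F}_k$-measurable) gives the claim.

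One bookkeeping point I would verify is that, since $S_k^1$ and $S_k^2$ are $\mathcal{F}_{k+1/2}$-measurable, conditioning on $(\mathcal{F}_{k+1/2},S_k^j)$ in~(\ref{eq31})--(\ref{eq32}) coincides with the $\mathcal{F}_{k+1/2}$-conditional probability restricted to the relevant event, so the bounds apply pointwise inside those events. It is worth emphasizing that the guaranteed decrease is $2\nu\Delta_k^2$ on $S_k^1$ but only $\nu\Delta_k^2$ on $S_k^2$, so the stated bound is tight precisely on the false-acceptance regime; notably, the ``correct rejection'' condition~(\ref{eq31}) is not even needed here, only nonnegativity of $p$ together with the false-acceptance bound~(\ref{eq32}).
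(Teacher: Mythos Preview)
Your proof is correct and follows essentially the same route as the paper: condition first on $\mathcal{F}_{k+1/2}$, expand $\Phi_k-\Phi_{k+1}$ over the two outcomes to obtain $p(R-c\Delta_k^2)+\eta(1-\theta^2)\Delta_k^2$ after using $\eta(\gamma^2-\theta^2)=c$, split into the $\mathcal{F}_{k+1/2}$-measurable events $S_k^1$ and $S_k^2$, handle $S_k^1$ by nonnegativity of $p$ and $S_k^2$ via the $C$-accuracy bound~(\ref{eq32}), and finish with the tower property. Your explicit identification $C=\nu\Delta_k^2$ and $\eta(1-\theta^2)=2\nu$, and your remark that~(\ref{eq31}) is never invoked, are accurate observations that the paper's proof leaves implicit.
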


\begin{proof}
    We start by proving $E[\Phi_k-\Phi_{k+1}|\mathcal{F}_{k+1/2}]\geq\nu\Delta^2_k$. Note that $\Delta_k$, $X_k$, and $D_k$ are $\mathcal{F}_{k+1/2}$-measurable. We express $E[\Phi_k-\Phi_{k+1}|\mathcal{F}_{k+1/2}]$ in terms of $P(A_k=1|\mathcal{F}_{k+1/2})$ as follows
\begin{align*}
    E[\Phi_k-\Phi_{k+1}|\mathcal{F}_{k+1/2}]\ =\ &E[f(X_k)+\eta\Delta_k^2-f(X_{k+1})-\eta\Delta_{k+1}^2|\mathcal{F}_{k+1/2}]\\
    =\ &E[\left(f(X_k)+\eta\Delta_k^2-f(X_{k+1})-\eta\Delta_{k+1}^2\right)\mathbf{1}(A_k=1)|\mathcal{F}_{k+1/2}]\\
    &+E[\left(f(X_k)+\eta\Delta_k^2-f(X_{k+1})-\eta\Delta_{k+1}^2\right)\mathbf{1}(A_k=0)|\mathcal{F}_{k+1/2}]\\
    =\ &(f(X_k)-f(X_k+\Delta_kD_k)+\eta(1-\gamma^2)\Delta_k^2)P(A_k=1|\mathcal{F}_{k+1/2})\\
    &+\eta(1-\theta^2)\Delta_k^2P(A_k=0|\mathcal{F}_{k+1/2}).
\end{align*}
Since $P(A_k=0|\mathcal{F}_{k+1/2})=1-P(A_k=1|\mathcal{F}_{k+1/2})$, we have
\begin{align*}
    E[\Phi_k-\Phi_{k+1}|\mathcal{F}_{k+1/2}]=\ &(f(X_k)-f(X_k+\Delta_kD_k)+\eta(\theta^2-\gamma^2)\Delta_k^2)P(A_k=1|\mathcal{F}_{k+1/2})\\
    &+\eta(1-\theta^2)\Delta_k^2\\
    =\ &(f(X_k)-f(X_k+\Delta_kD_k)-c\Delta_k^2)P(A_k=1|\mathcal{F}_{k+1/2})\\
    &+c\frac{1-\theta^2}{\gamma^2-\theta^2}\Delta_k^2.
\end{align*}

Note that $S^1_k=\{\omega:f(X_k)-f(X_k+\Delta_k D_k)\geq c\Delta_k^2\}$ and $S^2_k=\{\omega:f(X_k)-f(X_k+\Delta_k D_k)< c\Delta_k^2\}$ are two disjoint $\mathcal{F}_{k+1/2}$-measurable events such that $S^1_k\cup S^2_k=\Omega$. We prove $E[\Phi_k-\Phi_{k+1}|\mathcal{F}_{k+1/2}]\geq\nu\Delta^2_k$ for these two scenarios separately.

\textbf{Case 1.} First we consider the case $S_k^1=\{\omega:f(X_k)-f(X_k+\Delta_k D_k)\geq c\Delta_k^2\}$ and it follows from $P(A_k=1|\mathcal{F}_{k+1/2})\geq0$ that
\begin{align*}
    E[\Phi_k-\Phi_{k+1}|\mathcal{F}_{k+1/2},S_k^1]\geq c\frac{1-\theta^2}{\gamma^2-\theta^2}\Delta_k^2\geq\nu\Delta^2_k.
\end{align*}

\textbf{Case 2.} Otherwise, we consider $S_k^2=\{\omega:f(X_k)-f(X_k+\Delta_kD_k)<c\Delta_k^2\}$. Then (\ref{eq32}) guarantees
\begin{align*}
    P(A_k=1|\mathcal{F}_{k+1/2},S_k^2)\leq\frac{c\Delta_k^2(1-\theta^2)/2(\gamma^2-\theta^2)}{c\Delta_k^2-(f(X_k)-f(X_k+\Delta_k D_k))}.
\end{align*}
It follows that
\begin{align*}
    E[\Phi_k-\Phi_{k+1}|\mathcal{F}_{k+1/2},S_k^2]=&(f(X_k)-f(X_k+\Delta_kD_k)-c\Delta_k^2)P(A_k=1|\mathcal{F}_{k+1/2},S_k^2)+c\frac{1-\theta^2}{\gamma^2-\theta^2}\Delta_k^2\\
    \geq&(c\frac{1-\theta^2}{\gamma^2-\theta^2}-\frac{c}{2}\frac{1-\theta^2}{\gamma^2-\theta^2})\Delta_k^2=\nu\Delta_k^2.
\end{align*}
Since $S^1_k$ and $S^2_k$ partition $\Omega$ and are $\mathcal{F}_{k+1/2}$-measurable, we conclude that
\begin{align*}
    E[\Phi_k-\Phi_{k+1}|\mathcal{F}_{k+1/2}]\geq \nu\Delta^2_k.
\end{align*}

Note that $\Delta_k$ is $\mathcal{F}_{k}$-measurable. It follows from the tower property of conditional expectation that
\begin{align*}
    E[\Phi_k-\Phi_{k+1}|\mathcal{F}_{k}]&=E[E[\Phi_k-\Phi_{k+1}|\mathcal{F}_{k+1/2}]|\mathcal{F}_{k}]\\
    &\geq E[\nu\Delta^2_k|\mathcal{F}_{k}]=\nu\Delta^2_k.
\end{align*}
\end{proof}

Now we need to guarantee that the directions used in Algorithm \ref{SDS} are of descent type with a sufficiently large probability. To do so, we need to generate a direction for which
\begin{align*}
    \kappa_k=\frac{-\nabla f(X_k)^\top D_k}{\|\nabla f(X_k)\|\|D_k\|}
\end{align*}
is sufficiently large. Note that the denominator of $\kappa_k$ will not be zero because we will assume that $k<T_\epsilon$, where $T_\epsilon$ was defined in~(\ref{T}). From~\cite[Lemma B.2]{gratton2015direct}, it turns out that if $\mathcal{D}_k=\{D_k\}$ is uniformly generated from a unit sphere, the direction $D_k$ will be enough descent ($\kappa_k\geq1/(7\sqrt{n})$) for a sufficiently large probability.

\begin{lemma}
For $k<T_\epsilon$, it holds for $\tau\in[0,\sqrt{n}]$ that\label{3L2}
\begin{align*}
    P(\kappa_k\geq\frac{\tau}{\sqrt{n}}|\mathcal{F}_k)\geq\frac{1}{2}-\frac{\tau}{\sqrt{2\pi}}.
\end{align*}
For simplicity we select $\tau=\frac{1}{7}$ and it holds for $k<T_\epsilon$ that
\begin{align*}
    P(\kappa_k\geq\frac{1}{7\sqrt{n}}|\mathcal{F}_k)\geq\frac{3}{7}.
\end{align*}
\end{lemma}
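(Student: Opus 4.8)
The plan is to note that this is exactly the content of \cite[Lemma B.2]{gratton2015direct}, and to reprove it by reducing to the law of a single coordinate of a uniform point on the sphere. Conditionally on $\mathcal{F}_k$, the gradient $\nabla f(X_k)$ is a fixed vector, nonzero because $k<T_\epsilon$ forces $\|\nabla f(X_k)\|>\epsilon$, while $D_k$ is drawn uniformly from the unit sphere independently of $\mathcal{F}_k$. Writing $u=-\nabla f(X_k)/\|\nabla f(X_k)\|$, a fixed unit vector, and recalling $\|D_k\|=1$, we have $\kappa_k=u^\top D_k$. By the rotational invariance of the uniform law on the sphere, the conditional law of $u^\top D_k$ does not depend on $u$ and coincides with that of a single coordinate of a uniform point on the sphere; equivalently, using $D_k\stackrel{d}{=}Z/\|Z\|$ for $Z\sim N(0,I_n)$, it equals the law of $T:=Z_1/\|Z\|$. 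Since $T$ is symmetric about $0$, we have $P(T\ge0)=1/2$, hence
\[
P\!\left(\kappa_k\ge\tfrac{\tau}{\sqrt{n}}\,\big|\,\mathcal{F}_k\right)=\tfrac12-P\!\left(0\le T<\tfrac{\tau}{\sqrt{n}}\right),
\]
so it suffices to bound the middle probability by $\tau/\sqrt{2\pi}$.

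To bound $P(0\le T<\tau/\sqrt n)$ I would use the explicit marginal density $f_T(t)=\frac{\Gamma(n/2)}{\sqrt{\pi}\,\Gamma((n-1)/2)}(1-t^2)^{(n-3)/2}$ on $[-1,1]$. For $n\ge3$ this density is maximized at $t=0$ and nonincreasing on $(0,1)$, so, since $\tau\le\sqrt n$ keeps the integration range inside $[0,1]$,
\[
P\!\left(0\le T<\tfrac{\tau}{\sqrt n}\right)=\int_0^{\tau/\sqrt n}f_T(t)\,dt\le \tfrac{\tau}{\sqrt n}\,f_T(0)=\tfrac{\tau}{\sqrt n}\cdot\frac{\Gamma(n/2)}{\sqrt\pi\,\Gamma((n-1)/2)}.
\]
The crux is then the Gamma-ratio estimate $\Gamma(n/2)/\Gamma((n-1)/2)\le\sqrt{(n-1)/2}$ (Wendel/Gautschi), which turns the right-hand side into $\tfrac{\tau}{\sqrt{2\pi}}\sqrt{(n-1)/n}\le\tau/\sqrt{2\pi}$, establishing the first claim. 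The low-dimensional cases $n\in\{1,2\}$, where the density is not unimodal, are checked directly; for $n=2$ one has $P(0\le T<\tau/\sqrt2)=\tfrac1\pi\arcsin(\tau/\sqrt2)$ and uses the convexity bound $\arcsin x\le\tfrac\pi2 x$.

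The second statement follows by specializing $\tau=\tfrac17$, which lies in $[0,\sqrt n]$ for every $n\ge1$: the bound reads $\tfrac12-\tfrac{1}{7\sqrt{2\pi}}$, and since $\sqrt{2\pi}\ge2$ we get $\tfrac{1}{7\sqrt{2\pi}}\le\tfrac1{14}$, whence $\tfrac12-\tfrac{1}{7\sqrt{2\pi}}\ge\tfrac12-\tfrac1{14}=\tfrac37$. I expect the main obstacle to be the middle step of the first part: pinning the constant exactly to $1/\sqrt{2\pi}$ requires the sharp Gamma-ratio bound (equivalently $E[\sqrt{W}]\le\sqrt{n-1}$ for $W\sim\chi^2_{n-1}$) rather than a crude estimate, and one must make the bound uniform over all admissible $\tau\in[0,\sqrt n]$. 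An alternative route conditions on $W=Z_2^2+\cdots+Z_n^2$, reducing the middle event to $\{0\le Z_1<\tau\sqrt{W}/\sqrt{n-\tau^2}\}$ and bounding $P(\cdot\mid W)\le \tau\sqrt{W}/(\sqrt{n-\tau^2}\sqrt{2\pi})$; this is clean for $\tau\le1$ and thus already covers the applied value $\tau=1/7$, but it needs the density argument to close the narrow range $\tau\in(1,\sqrt{\pi/2})$ where the right-hand side is still positive.
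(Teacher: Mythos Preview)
Your proposal is correct. The paper itself does not prove this lemma; it simply defers to \cite[Lemma~B.2]{gratton2015direct}, so your write-up is strictly more informative than the paper's one-line citation. Your argument---reducing $\kappa_k$ to a single coordinate of a uniform sphere point, bounding the central slab via the marginal density and the Wendel/Gautschi inequality $\Gamma(n/2)/\Gamma((n-1)/2)\le\sqrt{(n-1)/2}$, and handling $n\in\{1,2\}$ separately---is a standard and clean way to recover the cited result. The numerical check for the second display is also right: $\sqrt{2\pi}>2$ gives $\tfrac12-\tfrac{1}{7\sqrt{2\pi}}\ge\tfrac12-\tfrac1{14}=\tfrac37$. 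One cosmetic point: in the $n=2$ case you stop at $\tfrac1\pi\arcsin(\tau/\sqrt2)\le\tau/(2\sqrt2)$, and it is worth stating explicitly that $2\sqrt2>\sqrt{2\pi}$ so this is indeed $\le\tau/\sqrt{2\pi}$.
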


\begin{proof}
    See~\cite[Lemma B.2]{gratton2015direct}, where the result holds for any $k$.
\end{proof}

The next step in the convergence theory is to ensure that for each $k<T_\epsilon$, if $D_k$ is an at least~$1/(7\sqrt{n})$-descent direction and the stepsize $\Delta_k$ is smaller than a certain $\delta_\epsilon$, then the sufficient decrease condition is satisfied at iteration $k$. To do so, we need another assumption on the objective function, which is common in the DFO literature.

\begin{assumption}\label{A2}
    Suppose that the objective function $f$ is bounded from below on $\mathbb{R}^n$. Suppose that $f$ is smooth and its gradient $\nabla f$ is Lipschitz continuous with constant $L_f$.
\end{assumption}

The next lemma is a consequence of assuming that $f$ is smooth and its gradient $\nabla f$ is $L_f$-Lipschitz continuous.

\begin{lemma}\label{3L3}
    Let Assumption \ref{3L2} hold. For all $k<T_\epsilon$, if $\kappa_k\geq\frac{1}{7\sqrt{n}}$ and $\Delta_k\leq\delta_\epsilon=\frac{2}{7L_f+14c}\frac{\epsilon}{\sqrt{n}}$, then
    \begin{align}\label{eql33}
        f(X_k)-f(X_k+\Delta_kD_k)\geq c\Delta_k^2.
    \end{align}
\end{lemma}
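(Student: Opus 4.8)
The plan is to derive the sufficient decrease condition from the standard quadratic upper bound (the ``descent lemma'') that follows from Assumption~\ref{A2}, namely the $L_f$-Lipschitz continuity of $\nabla f$. First I would apply this bound with $x=X_k$ and $y=X_k+\Delta_k D_k$ to obtain
\[
f(X_k+\Delta_k D_k)\ \le\ f(X_k)+\Delta_k\,\nabla f(X_k)^\top D_k+\tfrac{L_f}{2}\Delta_k^2\|D_k\|^2 .
\]
Since $D_k$ is drawn uniformly from the unit sphere, $\|D_k\|=1$, so rearranging yields the lower bound $f(X_k)-f(X_k+\Delta_k D_k)\ge -\Delta_k\nabla f(X_k)^\top D_k-\tfrac{L_f}{2}\Delta_k^2$.

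Next I would rewrite the first-order term using the definition of $\kappa_k$. Because $-\nabla f(X_k)^\top D_k=\kappa_k\|\nabla f(X_k)\|\,\|D_k\|=\kappa_k\|\nabla f(X_k)\|$, the previous inequality becomes
\[
f(X_k)-f(X_k+\Delta_k D_k)\ \ge\ \kappa_k\|\nabla f(X_k)\|\,\Delta_k-\tfrac{L_f}{2}\Delta_k^2 .
\]
To establish~(\ref{eql33}) it then suffices to show the right-hand side is at least $c\Delta_k^2$. Dividing by $\Delta_k>0$, this is equivalent to $\kappa_k\|\nabla f(X_k)\|\ge(\tfrac{L_f}{2}+c)\Delta_k$, i.e.\ to the stepsize bound $\Delta_k\le \tfrac{2\kappa_k\|\nabla f(X_k)\|}{L_f+2c}$. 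Thus the whole lemma reduces to verifying that the assumed bound $\Delta_k\le\delta_\epsilon$ forces this inequality.

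Finally I would lower-bound the threshold $\tfrac{2\kappa_k\|\nabla f(X_k)\|}{L_f+2c}$ using the two hypotheses. Since $k<T_\epsilon$ and $T_\epsilon$ was defined in~(\ref{T}) as the first index with $\|\nabla f(X_k)\|\le\epsilon$, we have $\|\nabla f(X_k)\|>\epsilon$; together with $\kappa_k\ge\tfrac{1}{7\sqrt n}$ this gives $\tfrac{2\kappa_k\|\nabla f(X_k)\|}{L_f+2c}\ge \tfrac{2\epsilon}{7\sqrt n\,(L_f+2c)}=\tfrac{2}{7L_f+14c}\tfrac{\epsilon}{\sqrt n}=\delta_\epsilon$. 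Hence $\Delta_k\le\delta_\epsilon$ implies the required stepsize bound and therefore~(\ref{eql33}). This is essentially a routine descent-lemma estimate, so there is no genuine obstacle; the only points requiring care are tracking the constants so that the threshold collapses exactly to the prescribed $\delta_\epsilon$, and recording that it is precisely the condition $k<T_\epsilon$ that supplies the strict lower bound $\|\nabla f(X_k)\|>\epsilon$ while $\|D_k\|=1$ removes the denominator in $\kappa_k$.
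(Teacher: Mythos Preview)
Your proposal is correct and follows essentially the same route as the paper: both apply the descent lemma to obtain $f(X_k)-f(X_k+\Delta_k D_k)\ge \kappa_k\|\nabla f(X_k)\|\Delta_k-\tfrac{L_f}{2}\Delta_k^2$, then use $\kappa_k\ge\tfrac{1}{7\sqrt n}$, $\|\nabla f(X_k)\|>\epsilon$ (from $k<T_\epsilon$), and $\Delta_k\le\delta_\epsilon$ to conclude. The only cosmetic difference is that you divide by $\Delta_k$ and phrase the final step as a stepsize threshold, whereas the paper keeps the inequality in the form $\kappa_k\|\nabla f(X_k)\|\Delta_k\ge (c+\tfrac{L_f}{2})\Delta_k^2$.
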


\begin{proof}
    It is known~(see, for example,~\cite[Lemma 1.2.3]{nesterov2013introductory}) that Assumption~\ref{3L2} implies for any $x,y$ from $\mathbb{R}^n$ that
    \begin{align*}
        f(x)-f(y)\geq\nabla f(x)^T(x-y)-\frac{L_f}{2}\|x-y\|^2.
    \end{align*}
    After substituting $x=X_k$ and $y=X_k+\Delta_kD_k$, and using $\|D_k\|=1$, we obtain
\begin{align}
    f(X_k)-f(X_k+\Delta_kD_k)&\geq-\nabla f(X_k)^\top (\Delta_kD_k)-\frac{L_f}{2}\Delta_k^2\notag\\
    &=\kappa_k\|\nabla f(X_k)\|\Delta_k-\frac{L_f}{2}\Delta_k^2.\label{eq23}
\end{align}
It then follows from $\kappa_k\geq\frac{1}{7\sqrt{n}}$, $\|\nabla f(X_k)\|>\epsilon$, $\Delta_k\leq\delta_\epsilon$, and the definition of $\delta_\epsilon$ that
\begin{align}
    \kappa_k\|\nabla f(X_k)\|\Delta_k\geq\frac{\epsilon}{7\sqrt{n}}\Delta_k\geq c\Delta_k^{2}+\frac{L_f}{2}\Delta_k^2.\label{eq24}
\end{align}
Combining (\ref{eq23}) with (\ref{eq24}) gives us~(\ref{eql33}).
\end{proof}

Lemma~\ref{3L3} gives a sufficient condition for the sufficient decrease condition to be satisfied. The next lemma gives a constant lower bound on the probability of accepting $H_0$ when the stepsize $\Delta_k$ is smaller than $\delta_\epsilon$. The proof relies on both the probability of having an at least $1/(7\sqrt{n})$-descent direction (Lemma~\ref{3L2}) and the conditional probability of correctly identifying sufficient decrease (given by condition~(\ref{eq31})).

\begin{lemma}\label{3L4}
    Let Assumptions \ref{A1}--\ref{A2} hold. For any $\epsilon>0$ and $k<T_\epsilon$, it holds
    \begin{align}
        P(A_k=1|\mathcal{F}_{k},\{\Delta_k\leq\delta_\epsilon\})\geq\frac{3}{14}.\label{34}
    \end{align}
\end{lemma}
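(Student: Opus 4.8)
The plan is to combine three ingredients through the tower property for the filtration $\mathcal{F}_k\subseteq\mathcal{F}_{k+1/2}$: the descent-direction probability from Lemma~\ref{3L2}, the deterministic sufficient-decrease guarantee from Lemma~\ref{3L3}, and the test-accuracy condition~(\ref{eq31}). All the work takes place on the $\mathcal{F}_k$-measurable event $\{\Delta_k\leq\delta_\epsilon\}$, so the conditioning on this event in the statement amounts to a pathwise restriction of the $\mathcal{F}_k$-conditional probability. I abbreviate the good-direction event by $G_k=\{\kappa_k\geq 1/(7\sqrt{n})\}$ and note at the outset that, since $X_k$ and $D_k$ are $\mathcal{F}_{k+1/2}$-measurable, so are $\kappa_k$ and hence $G_k$.

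First I would drop to the good direction and insert the intermediate $\sigma$-algebra: $P(A_k=1\,|\,\mathcal{F}_k)\geq E[\mathbf{1}(A_k=1)\mathbf{1}(G_k)\,|\,\mathcal{F}_k]=E[\mathbf{1}(G_k)\,P(A_k=1\,|\,\mathcal{F}_{k+1/2})\,|\,\mathcal{F}_k]$, where the last equality uses the $\mathcal{F}_{k+1/2}$-measurability of $G_k$ to pull the indicator out of the inner conditional expectation. Next I would control the inner factor on $G_k$: by Lemma~\ref{3L3}, on $G_k\cap\{\Delta_k\leq\delta_\epsilon\}$ the sufficient decrease $f(X_k)-f(X_k+\Delta_kD_k)\geq c\Delta_k^2$ holds, which is precisely the defining event $S_k^1$, so $G_k\cap\{\Delta_k\leq\delta_\epsilon\}\subseteq S_k^1$. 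On $S_k^1$, condition~(\ref{eq31}) gives $P(A_k=0\,|\,\mathcal{F}_{k+1/2})\leq 1/2$, hence $P(A_k=1\,|\,\mathcal{F}_{k+1/2})\geq 1/2$. Substituting this bound into the previous display yields, on $\{\Delta_k\leq\delta_\epsilon\}$, that $P(A_k=1\,|\,\mathcal{F}_k)\geq \tfrac{1}{2}E[\mathbf{1}(G_k)\,|\,\mathcal{F}_k]=\tfrac{1}{2}P(G_k\,|\,\mathcal{F}_k)$. Finally, since $k<T_\epsilon$, Lemma~\ref{3L2} supplies $P(G_k\,|\,\mathcal{F}_k)\geq 3/7$, and the product $\tfrac{1}{2}\cdot\tfrac{3}{7}=\tfrac{3}{14}$ is the desired bound.

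The main obstacle I anticipate is the measurability bookkeeping that legitimizes the tower-property step: one must check that $G_k$ is $\mathcal{F}_{k+1/2}$-measurable so that it factors out of $E[\,\cdot\,|\,\mathcal{F}_{k+1/2}]$, while $A_k$ is only resolved at the coarser level $\mathcal{F}_{k+1}$, and that the pathwise inclusion $G_k\cap\{\Delta_k\leq\delta_\epsilon\}\subseteq S_k^1$ holds so that the conditional accuracy bound~(\ref{eq31}), stated on $S_k^1$, is applicable on the good-direction event. Once these are in place the remaining steps are a short chain of inequalities requiring no further estimates.
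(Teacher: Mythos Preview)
Your proposal is correct and follows essentially the same approach as the paper: both proofs combine Lemma~\ref{3L2}, Lemma~\ref{3L3}, and condition~(\ref{eq31}) via the tower property for $\mathcal{F}_k\subseteq\mathcal{F}_{k+1/2}$. The only cosmetic difference is that the paper factors the joint probability using the multiplication rule $P(E_1E_2|E_3)=P(E_1|E_2E_3)P(E_2|E_3)$ and then bounds $P(A_k=1|\mathcal{F}_{k},G_k\cap\{\Delta_k\leq\delta_\epsilon\})$ by the tower property, whereas you keep $\mathbf{1}(G_k)$ inside a single $\mathcal{F}_k$-conditional expectation and pull it through the inner $\mathcal{F}_{k+1/2}$-conditioning; the two formulations are equivalent.
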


\begin{proof}
    Note that $\Delta_k$ and $X_k$ are $\mathcal{F}_{k}$-measurable. It follows from event inclusion that
\begin{align*}
    P(A_k=1|\mathcal{F}_{k},\{\Delta_k\leq\delta_\epsilon\})\ \geq\ P(\{A_k=1\}\cap\{\kappa_k\geq\frac{1}{7\sqrt{n}}\}|\mathcal{F}_{k},\{\Delta_k\leq\delta_\epsilon\}).
\end{align*}
Then it follows from conditional probability formula $P(E_1E_2|E_3)=P(E_1|E_2E_3)P(E_2|E_3)$ that
\begin{align}
    P(A_k=1|\mathcal{F}_{k}&,\{\Delta_k\leq\delta_\epsilon\})\ \geq\ P(\{A_k=1\}\cap\{\kappa_k\geq\frac{1}{7\sqrt{n}}\}|\mathcal{F}_{k},\{\Delta_k\leq\delta_\epsilon\})\notag\\
    =&P(A_k=1|\mathcal{F}_{k},\{\kappa_k\geq\frac{1}{7\sqrt{n}}\}\cap\{\Delta_k\leq\delta_\epsilon\})\cdot P(\kappa_k\geq\frac{1}{7\sqrt{n}}|\mathcal{F}_{k},\{\Delta_k\leq\delta_\epsilon\}).\label{eq20}
\end{align}
From Lemma \ref{3L2}, since $\Delta_k$ is $\mathcal{F}_{k}$-measurable, we know that
\begin{align}
    P(\kappa_k\geq\frac{1}{7\sqrt{n}}|\mathcal{F}_{k},\{\Delta_k\leq\delta_\epsilon\})\geq\frac{3}{7}.\label{eq21}
\end{align}

From Lemma \ref{3L3} we have
\begin{align}
    \left\{\{\kappa_k\geq\frac{1}{7\sqrt{n}}\}\cap\{\Delta_k\leq\delta_\epsilon\}\right\}\subseteq\{f(X_k)-f(X_k+\Delta_kD_k)\geq c\Delta_k^2\}=S_k^1.\label{eqa}
\end{align}
From (\ref{eq31}) we have
\begin{align}
    P(A_k=1|\mathcal{F}_{k+1/2},S_k^1)>\frac{1}{2}.\label{eqb}
\end{align}
Together with the knowledge that both events in~(\ref{eqa}) above are $\mathcal{F}_{k+1/2}$-measurable,~(\ref{eqb}) implies
\begin{align*}
    P(A_k=1|\mathcal{F}_{k+1/2},\{\kappa_k\geq\frac{1}{7\sqrt{n}}\}\cap\{\Delta_k\leq\delta_\epsilon\})>\frac{1}{2}.
\end{align*}
Using the tower property of conditional expectation, it follows that
\begin{align}
    P(A_k=1&|\mathcal{F}_{k},\{\kappa_k\geq\frac{1}{7\sqrt{n}}\}\cap\{\Delta_k\leq\delta_\epsilon\})\notag\\
    &=E[P(A_k=1|\mathcal{F}_{k+1/2},\{\kappa_k\geq\frac{1}{7\sqrt{n}}\}\cap\{\Delta_k\leq\delta_\epsilon\})|\mathcal{F}_{k},\{\kappa_k\geq\frac{1}{7\sqrt{n}}\}\cap\{\Delta_k\leq\delta_\epsilon\}]\notag\\
    &>E[\frac{1}{2}|\mathcal{F}_{k},\{\kappa_k\geq\frac{1}{7\sqrt{n}}\}\cap\{\Delta_k\leq\delta_\epsilon\}]\notag\\
    &=\frac{1}{2}.\label{eq22}
\end{align}
Applying (\ref{eq21}) and (\ref{eq22}) to (\ref{eq20}) yields~(\ref{34}).
\end{proof}

The rest of the rate derivation makes use of the result~\cite[Theorem 2]{blanchet2019convergence} which is rederived in the Appendix for a general probability $p>1/2$ (see Theorem~\ref{BP}). This result requires defining a submartingale $W_k$ to model the behavior of the stepsize, which we now do in the context of Algorithm~\ref{SDS}. Denote $W_0=0$. For $k\geq0$, let $W_{k+1}$ be a Bernoulli random variable taking values $\log{\gamma}$ or $\log{\theta}$ with probabilities described next. For $k\geq T_\epsilon$, the probabilities are
\begin{align*}
    P(W_{k+1}=\log\gamma|\mathcal{F}_{k})&=\frac{3}{14}\\
    P(W_{k+1}=\log\theta|\mathcal{F}_{k})&=\frac{11}{14}.
\end{align*}
For $k<T_\epsilon$, when $\Delta_k>\delta_\epsilon$, the probabilities are
\begin{align*}
    P(W_{k+1}=\log\gamma|\mathcal{F}_{k},\{\Delta_k>\delta_\epsilon\})&=\frac{3}{14}\\
    P(W_{k+1}=\log\theta|\mathcal{F}_{k},\{\Delta_k>\delta_\epsilon\})&=\frac{11}{14}.
\end{align*}
For $k<T_\epsilon$, when $\Delta_k\leq\delta_\epsilon$ and $A_k=0$, we define $W_{k+1}=\log{\theta}$ with probability one
\begin{align}
    P(W_{k+1}=\log\gamma|\mathcal{F}_{k},\{\Delta_k\leq\delta_\epsilon\}\cap\{A_k=0\})&=0\label{eq25}\\
    P(W_{k+1}=\log\theta|\mathcal{F}_{k},\{\Delta_k\leq\delta_\epsilon\}\cap\{A_k=0\})&=1\notag.
\end{align}
For $k<T_\epsilon$, when $\Delta_k\leq\delta_\epsilon$ and $A_k=1$, the probabilities are
\begin{align}
    P(W_{k+1}=\log\gamma|\mathcal{F}_{k},\{\Delta_k\leq\delta_\epsilon\}\cap\{A_k=1\})&=\frac{3}{14}\frac{1}{P(A_k=1|\mathcal{F}_{k},\{\Delta_k\leq\delta_\epsilon\})}\label{eq26}\\
    P(W_{k+1}=\log\theta|\mathcal{F}_{k},\{\Delta_k\leq\delta_\epsilon\}\cap\{A_k=1\})&=1-\frac{3}{14}\frac{1}{P(A_k=1|\mathcal{F}_{k},\{\Delta_k\leq\delta_\epsilon\})}.\notag
\end{align}
Note that these last two probabilities are well defined because of Lemma~\ref{3L4}. It then follows from (\ref{eq25}), (\ref{eq26}), and the formula of total probability that for $k<T_\epsilon$
\begin{align*}
    P(W_{k+1}=\log\gamma|\mathcal{F}_{k},\{\Delta_k\leq\delta_\epsilon\})&=\frac{3}{14}\\
    P(W_{k+1}=\log\theta|\mathcal{F}_{k},\{\Delta_k\leq\delta_\epsilon\})&=\frac{11}{14}.
\end{align*}
Then we can conclude that for any $k>0$
\begin{align*}
    P(W_{k+1}=\log\gamma|\mathcal{F}_{k})&=\frac{3}{14}\\
    P(W_{k+1}=\log\theta|\mathcal{F}_{k})&=\frac{11}{14}.
\end{align*}

To be able to apply Proposition~\ref{BP} in Appendix B, the next step is to verify Assumptions~\ref{BA1}--\ref{BA2}. The validity of Assumption~\ref{BA2} results from Lemma~\ref{3L1}, where $\nu\Delta_k^2$ plays the role of $h(\Delta_k)$ in Assumption~\ref{BA2}. To complete the convergence theory, it remains to show the validity of Assumption~\ref{BA1}, which provides a lower bound for $W_{k+1}$ when $k<T_\epsilon$, by ensuring that if the stepsize $\Delta_{k+1}$ is smaller than a threshold (in our context this threshold is $\Delta_\epsilon=\delta_\epsilon\theta$), then the stepsize $\Delta_{k+1}$ must be no less than $\Delta_k\exp{(W_{k+1})}$.

\begin{lemma}\label{3L5}
One has for all $k$
\begin{align}
\mathbf{1}(k<T_\epsilon)\Delta_{k+1}\geq\mathbf{1}(k<T_\epsilon)\min\left(\Delta_ke^{W_{k+1}},\delta_\epsilon\theta\right).\label{35}
\end{align}
\end{lemma}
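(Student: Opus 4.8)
The plan is to reduce to the case $k < T_\epsilon$ (when $k \geq T_\epsilon$ both sides of~(\ref{35}) vanish because of the factor $\mathbf{1}(k < T_\epsilon)$, so the inequality holds trivially) and then to argue by a case split on the size of $\Delta_k$ relative to $\delta_\epsilon$. The whole point is that the Bernoulli variable $W_{k+1}$ was \emph{constructed} so as to be dominated by the true stepsize multiplier, so the proof is essentially a bookkeeping exercise that matches each branch of the definition of $W_{k+1}$ against the deterministic stepsize update of Algorithm~\ref{SDS} (namely $\Delta_{k+1} = \gamma\Delta_k$ when $A_k = 1$ and $\Delta_{k+1} = \theta\Delta_k$ when $A_k = 0$).

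First I would treat the case $\Delta_k > \delta_\epsilon$. Here, regardless of whether the candidate is accepted or rejected, the worst the stepsize can do is contract by $\theta$, so $\Delta_{k+1} \geq \theta\Delta_k > \theta\delta_\epsilon = \delta_\epsilon\theta$. Since the right-hand side of~(\ref{35}) is a minimum that is no larger than $\delta_\epsilon\theta$, the claimed inequality follows at once in this branch, and $W_{k+1}$ plays no role.

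Next I would treat the case $\Delta_k \leq \delta_\epsilon$, splitting further on the value of $A_k$. If $A_k = 0$, then~(\ref{eq25}) forces $W_{k+1} = \log\theta$ deterministically, so $\Delta_k e^{W_{k+1}} = \theta\Delta_k$, which is exactly the updated stepsize $\Delta_{k+1}$; hence $\Delta_{k+1} = \Delta_k e^{W_{k+1}} \geq \min(\Delta_k e^{W_{k+1}}, \delta_\epsilon\theta)$. If $A_k = 1$, then the true update gives $\Delta_{k+1} = \gamma\Delta_k$, while $e^{W_{k+1}} \in \{\gamma,\theta\}$ by~(\ref{eq26}) and in either case $e^{W_{k+1}} \leq \gamma$ because $\theta < 1 < \gamma$; therefore $\Delta_k e^{W_{k+1}} \leq \gamma\Delta_k = \Delta_{k+1}$, which again yields $\Delta_{k+1} \geq \min(\Delta_k e^{W_{k+1}}, \delta_\epsilon\theta)$. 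Combining the two subcases establishes the bound for every $k < T_\epsilon$, and multiplying through by $\mathbf{1}(k < T_\epsilon)$ gives~(\ref{35}).

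I do not expect a serious obstacle; the only subtlety is to keep the coupling straight, i.e., to remember that $W_{k+1}$ is an auxiliary variable and not the actual log-multiplier of the stepsize, and to invoke the correct defining relation in each branch---the tight equality coming from~(\ref{eq25}) when $A_k = 0$, and the slack inequality $\theta < \gamma$ when $A_k = 1$. The case $\Delta_k > \delta_\epsilon$ is precisely where the $\delta_\epsilon\theta$ term of the minimum is needed, ensuring the bound survives even when $W_{k+1} = \log\theta$ but the step is in fact accepted.
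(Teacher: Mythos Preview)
Your proposal is correct and follows essentially the same approach as the paper: both arguments exploit the coupling built into the definition of $W_{k+1}$ (equations~(\ref{eq25}) and~(\ref{eq26})) to obtain $\log(\Delta_{k+1}/\Delta_k)\geq W_{k+1}$ when $\Delta_k\leq\delta_\epsilon$, and both use the $\delta_\epsilon\theta$ term of the minimum to handle the remaining regime. The only cosmetic difference is that you split on $\Delta_k$ versus $\delta_\epsilon$, whereas the paper equivalently splits on $\Delta_{k+1}$ versus $\delta_\epsilon\theta$ (via the contrapositive $\Delta_{k+1}<\delta_\epsilon\theta\Rightarrow\Delta_k<\delta_\epsilon$).
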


\begin{proof}
Note that it follows from Algorithm~\ref{SDS} and the definition of $A_k$ that $A_k=0$ if and only if $\Delta_{k+1}/\Delta_k=\theta$. When $k<T_\epsilon$ and $\Delta_k\leq\delta_\epsilon$, since we have set $W_{k+1}=\log{\theta}$ with probability one when $A_k=0$, it turns out that, with probability one, whenever the value of $\log(\Delta_{k+1}/\Delta_k)$ is $\log{\theta}$, one has $W_{k+1}=\log{\theta}$. When $A_k=1$, $\Delta_{k+1}/\Delta_k=\gamma$, and $k<T_\epsilon$, $W_{k+1}$ can take the values $\log\gamma$ and $\log\theta$. Then we conclude that when $k<T_\epsilon$ and $\Delta_k\leq\delta_\epsilon$, one has
\begin{align}
    \log(\Delta_{k+1}/\Delta_k)\geq W_{k+1}.\label{eq33}
\end{align}

Note that $\Delta_{k+1}<\delta_\epsilon\theta$ implies $\Delta_k<\delta_\epsilon$, which in turn implies $\Delta_{k+1}\geq\Delta_ke^{W_{k+1}}$ when $k<T_\epsilon$ because of~(\ref{eq33}). We have covered all possible cases, and the proof is concluded.
\end{proof}

We are now ready to apply Theorem~\ref{BP} to Algorithm~\ref{SDS} with $\Phi_0=f(x_0)-f^*+\eta\delta_0^2$.

\begin{theorem}\label{T1}
    Let Assumptions~\ref{A1}--\ref{A2} hold. Let $\theta$ and $\gamma$ be chosen such that $3\log{\gamma}+11\log{\theta}>0$. Then
\begin{align*}
    E[T_\epsilon]\leq1+\frac{14\log{\gamma}}{3\log{\gamma}+11\log{\theta}}\frac{(\gamma^2-\theta^2)(f(X_0)-f^*)+c\Delta_0^2}{\frac{c}{2}(1-\theta^2)\theta^2}\frac{(7L_f+14c)^2}{4}\frac{n}{\epsilon^2}.
\end{align*}
\end{theorem}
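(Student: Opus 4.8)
The plan is to apply Theorem~\ref{BP}, the renewal-reward supermartingale complexity bound rederived in Appendix~B, to the triple $(\Phi_k,\Delta_k,W_k)$ attached to Algorithm~\ref{SDS}, and then to specialize the resulting abstract estimate by substituting the concrete quantities. First I would confirm the structural hypotheses of Theorem~\ref{BP}. The driving process $W_k$, with $W_0=0$, is a submartingale with the required drift: by the total-probability computation preceding the theorem one has $P(W_{k+1}=\log\gamma|\mathcal{F}_k)=\tfrac{3}{14}$ and $P(W_{k+1}=\log\theta|\mathcal{F}_k)=\tfrac{11}{14}$ for every $k$, so $E[W_{k+1}|\mathcal{F}_k]=\tfrac{1}{14}(3\log\gamma+11\log\theta)$, which is strictly positive exactly under the standing hypothesis $3\log\gamma+11\log\theta>0$. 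Assumption~\ref{BA2} holds with $h(\Delta_k)=\nu\Delta_k^2$ and $\nu=\tfrac{c}{2}\tfrac{1-\theta^2}{\gamma^2-\theta^2}$ by Lemma~\ref{3L1}, while Assumption~\ref{BA1} holds with threshold $\Delta_\epsilon=\delta_\epsilon\theta$ by Lemma~\ref{3L5}, which supplies precisely the stepsize lower bound~(\ref{35}) that Theorem~\ref{BP} requires.

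With these in hand, Theorem~\ref{BP} yields an abstract bound of the shape
\[
E[T_\epsilon]\leq 1+\frac{\log\gamma}{E[W_{k+1}|\mathcal{F}_k]}\cdot\frac{\Phi_0}{h(\delta_\epsilon\theta)},
\]
in which the prefactor is the ratio of the upward increment $\log\gamma$ to the drift of $W_k$. Inserting the drift $\tfrac{1}{14}(3\log\gamma+11\log\theta)$ turns this prefactor into $\tfrac{14\log\gamma}{3\log\gamma+11\log\theta}$, matching the statement.

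It then remains to evaluate the middle factor, which I would do in three bookkeeping steps. Using $\eta=\tfrac{c}{\gamma^2-\theta^2}$ and $\Phi_0=f(x_0)-f^*+\eta\Delta_0^2$, I would rewrite $(\gamma^2-\theta^2)\Phi_0=(\gamma^2-\theta^2)(f(X_0)-f^*)+c\Delta_0^2$, which is exactly the numerator appearing in the theorem. Next I would evaluate $h$ at the threshold, $h(\delta_\epsilon\theta)=\nu(\delta_\epsilon\theta)^2=\tfrac{c}{2}\tfrac{1-\theta^2}{\gamma^2-\theta^2}\delta_\epsilon^2\theta^2$. Finally I would substitute $\delta_\epsilon=\tfrac{2}{7L_f+14c}\tfrac{\epsilon}{\sqrt{n}}$, so that $\delta_\epsilon^2=\tfrac{4}{(7L_f+14c)^2}\tfrac{\epsilon^2}{n}$; the factor $\gamma^2-\theta^2$ then cancels between numerator and denominator, leaving the claimed $\tfrac{(7L_f+14c)^2}{4}\tfrac{n}{\epsilon^2}$ alongside the denominator $\tfrac{c}{2}(1-\theta^2)\theta^2$.

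I expect the main obstacle to lie not in new analysis but in faithful bookkeeping: one must feed Theorem~\ref{BP} exactly the objects its appendix statement expects, so that the abstract prefactor, the function $h$, the threshold $\Delta_\epsilon=\delta_\epsilon\theta$, and the initial value $\Phi_0$ are identified without an off-by-$\gamma$ or off-by-$\theta$ slip, since any such slip propagates directly into the final constant. The only genuinely non-mechanical point is verifying that $W_k$ serves as the driving submartingale uniformly across the case distinctions used to define it---the $k\geq T_\epsilon$ regime, the two $k<T_\epsilon$ regimes, and the conditioning on $A_k$---which the reduction to the constant probabilities $\tfrac{3}{14}$ and $\tfrac{11}{14}$ already settles.
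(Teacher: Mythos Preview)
Your proposal is correct and follows exactly the paper's route: verify Assumptions~\ref{BA1}--\ref{BA2} via Lemmas~\ref{3L5} and~\ref{3L1}, invoke Theorem~\ref{BP} with $\Delta_\epsilon=\delta_\epsilon\theta$ and $h(\Delta)=\nu\Delta^2$, and then substitute the explicit constants. One small clarification: Theorem~\ref{BP} as stated gives the prefactor $E[\tau_n]$, not directly the ratio $\log\gamma/E[W_{k+1}\mid\mathcal F_k]$; the identity you use,
\[
E[\tau_n]\;\leq\;1+(1-p)\,\frac{a-b}{p(a-b)+b}\;=\;\frac{a}{pa+(1-p)b}\;=\;\frac{\log\gamma}{E[W_{k+1}\mid\mathcal F_k]},
\]
comes from combining Lemmas~\ref{LB1} and~\ref{LB2}, so you should cite those explicitly rather than attribute the ratio form to Theorem~\ref{BP} alone.
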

Theorem~\ref{T1} ensures an expected worst-case complexity bound of $O(n/\epsilon^2)$ for Algorithm~\ref{SDS}.

\section{A numerical experiment}\label{s4}

In this section, we will report the numerical performance of 
Test~\ref{ST} against Test~\ref{FT} in an experiment running probabilistic-descent direct search under Gaussian noise. Recall from Section~\ref{s23} 
that Test~\ref{ST} is a sequential hypothesis test and Test~\ref{FT} is a fixed sample test. Specifically, we ran Algorithm~\ref{SDS} with Test~\ref{ST} or Test~\ref{FT} to solve its hypothesis test problem, given a total budget of $10000$ sample evaluations.

\subsection{Experiment setup}\label{s41}

We used 38 problems  
suggested in~\cite{giovannelli2024limitation} from the CUTEst dataset~\cite{gould2015cutest,gratton2025s2mpj}, which exhibit different features in terms of non-linearity, non-convexity, and partial separability. For each problem, we generated problem instances with different dimensions, which gives us a problem set of 91 problem instances in total. Please see Table~\ref{table1} for the problem names and corresponding dimensions of our problem set. To inject noise in the objective functions, we selected the noise term $F(X,\xi)-f(x)$ to follow an independent identically distributed Gaussian distribution with variance $\sigma^2\in\{0.01,1\}$. Our choice of additive Gaussian noise is consistent with the assumption of Proposition~\ref{prop}. Other choices are certainly possible, and we discuss them in Section~\ref{s5}. For performance evaluation, we used data profiles~\cite{more2009benchmarking} and performance profiles~\cite{dolan2002benchmarking}, selecting the tolerance parameter, which defines a problem being solved, as $\tau=0.1$. Each problem instance was ran 10 times in the experiment and considered as 10 problem instances in all the profiles.

\begin{table}[H]
\centering
\begin{tabularx}{\textwidth}{|Y|Y|Y|Y|}
\hline
Name & Dimension & Name & Dimension \\ \hline
ARGLINA & 10, 50, 100 & ARGTRIGLS & 10, 50, 100 \\
ARWHEAD & 100         & BDEXP & 100 \\
BOXPOWER & 10, 100 & BROWNAL & 10, 100 \\
COSINE & 10, 100 & CURLY10 & 100 \\
DIXON3DQ & 10, 100 & DQRTIC & 10, 50, 100 \\
ENGVAL1 & 2, 50, 100 & EXTROSNB & 5, 10, 100 \\
FLETBV3M & 10, 100 & FLETCBV3 & 10, 100 \\
FLETCHBV & 10, 100 & FLETCHCR & 10, 100 \\
FREUROTH & 2, 10, 50, 100 & INDEFM & 10, 50, 100 \\
MANCINO & 10, 20, 30, 50, 100 & MOREBV & 10, 50, 100 \\
NONCVXU2 & 10, 100 & NONCVXUN & 10, 100 \\
NONDIA & 10, 50, 100 & NONDQUAR & 100 \\
PENALTY2 & 10, 50, 100 & POWER & 10, 50, 100 \\
QING & 100 & QUARTC & 25, 100 \\
SENSORS & 10, 100 & SINQUAD & 5, 50, 100 \\
SCURLY10 & 10, 100 & SCURLY20 & 100 \\
SPARSINE & 10, 50, 100 & SPARSQUR & 10, 50, 100 \\
SSBRYBND & 10, 50, 100 & TRIDIA & 10, 50, 100 \\
TRIGON1 & 10, 100 & TOINTGSS & 10, 50, 100 \\ \hline
\end{tabularx}
\caption{Names and corresponding dimensions of the 91 CUTEst problem instances in the problem set.}
\label{table1}
\end{table}

We now specify the parameters of Algorithm~\ref{SDS}. 
We used the initial point provided by CUTEst 
dataset, chose the initial stepsize $\delta_0=1$, and selected $c=0.5$. However, the choice of $\theta\in(0,1)$ and $\gamma\in(1,\infty)$ can greatly affect the performance of the algorithm. Note that it is required from Theorem~\ref{T1} that $3\log{\gamma}+11\log{\theta}>0$, otherwise, the stepsize $\delta$ of Algorithm~\ref{SDS} may still converge to $0$ even when the iterates approach a non-stationary point. We also notice that the sample size per iteration increases rapidly as the stepsize $\delta$ decreases to $0$. This phenomenon occurs for both Test~\ref{ST} and Test~\ref{FT}, and is more severe in Test~\ref{FT} with a sample size of $\Theta(\delta^{-4})$. For the purposes of both algorithm performance and a fair comparison, we want to select a large $\theta$ so that the stepsize may stay away from $0$ unless necessary. Based on the above observations, we selected $\theta=0.95$ and $\gamma=1.3$ in our numerical experiment when using either Test~\ref{ST} or Test~\ref{FT}. We tried different $\theta$ and $\gamma$ when the algorithm uses either tests and observed no significant changes in the relative performance of the methods.

To choose the parameters in Test~\ref{ST} and Test~\ref{FT}, let us suppose that at an iteration $k$ of Algorithm~\ref{SDS}, $\sigma_k^2$ is the variance of $Y_k$, or at least a known upper bound thereof. The number $C_k=c\Delta_k^2(1-\theta^2)/2(\gamma^2-\theta^2)$ in Algorithm~\ref{SDS} is known at each iteration $k$. For Test~\ref{FT}, we selected the fixed sample size $m=\sigma_k^2C_k^{-2}$. For Test~\ref{ST}, we selected the test lower and upper bounds $a_l=-b_l=\sigma_k^2/(2eC_k)$.

\subsection{Experiment results}\label{s42}

The results are reported in Figures~\ref{fig:41} and~\ref{fig:42} for the two noise variances. We tried larger budgets, different values of $\tau$ in the profiles, and smaller variances, but we observed no significant changes in the relative performance of the methods. For both cases, it can be clearly seen that using the sequential test (Test~\ref{ST}) outperforms using the fixed sample test (Test~\ref{FT}). The performance gap increases as the noise variance increases.

\begin{figure}[H]
    \centering
    \includegraphics[width=0.4\textwidth]{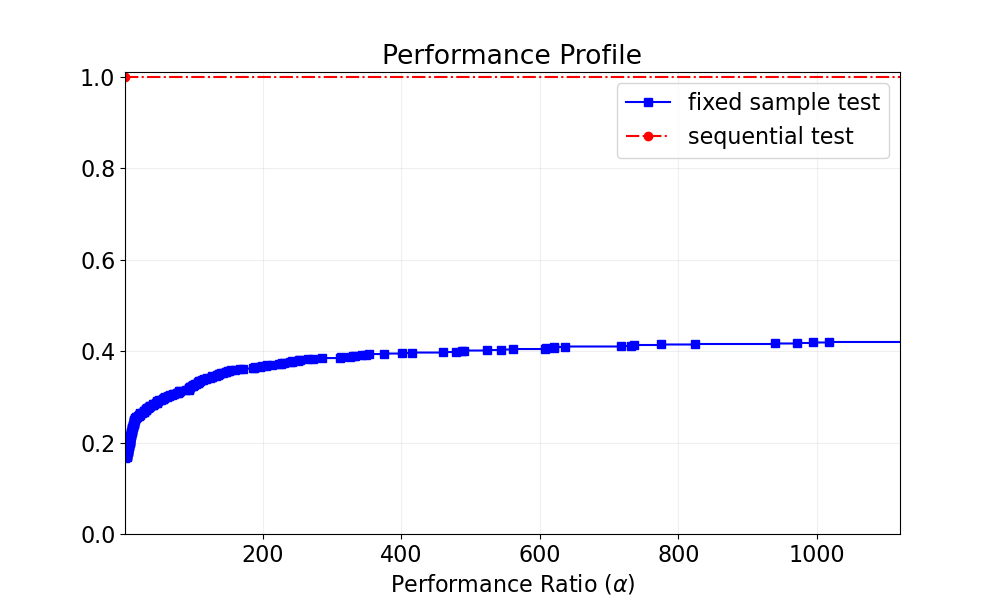}
    \includegraphics[width=0.4\textwidth]{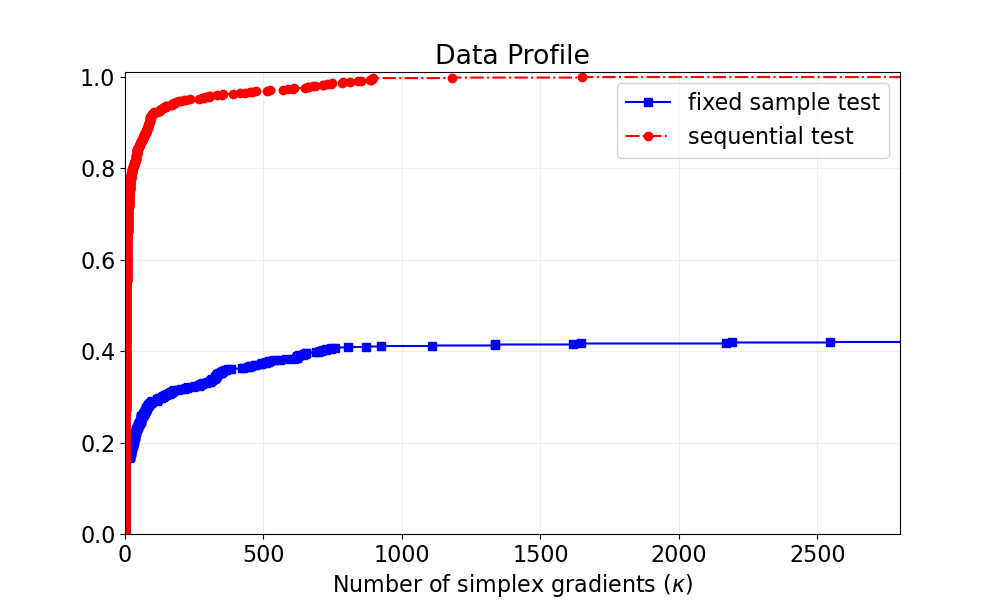}
    \caption{Performance and data profiles for Algorithm~\ref{SDS} using sequential test (Test~\ref{ST}) or fixed sample test (Test~\ref{FT}) for noise variance $\sigma^2=1$ for the problem set of Table~\ref{table1}.}
    \label{fig:41}
\end{figure}

\begin{figure}[H]
    \centering
    \includegraphics[width=0.4\textwidth]{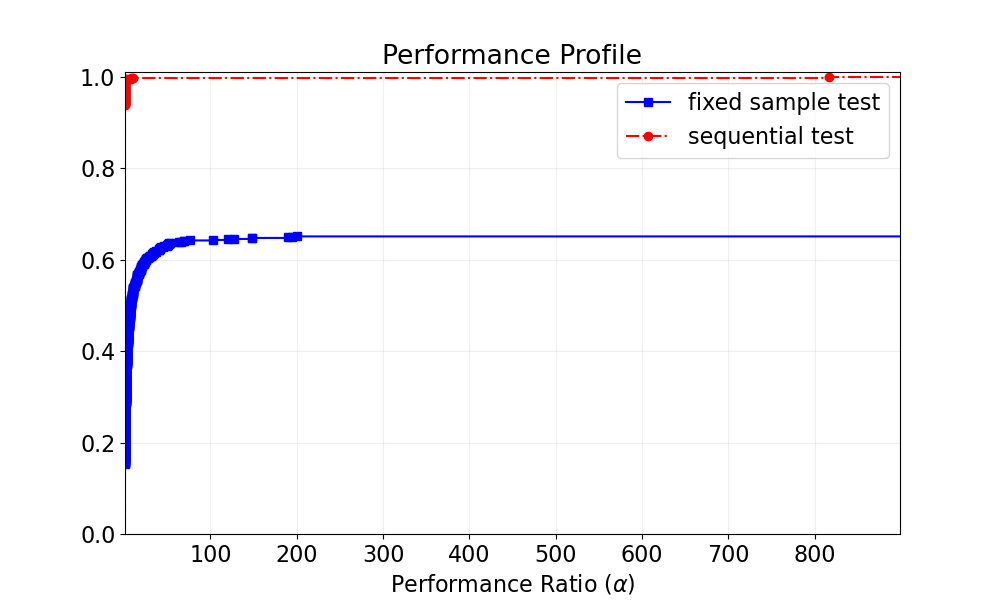}
    \includegraphics[width=0.4\textwidth]{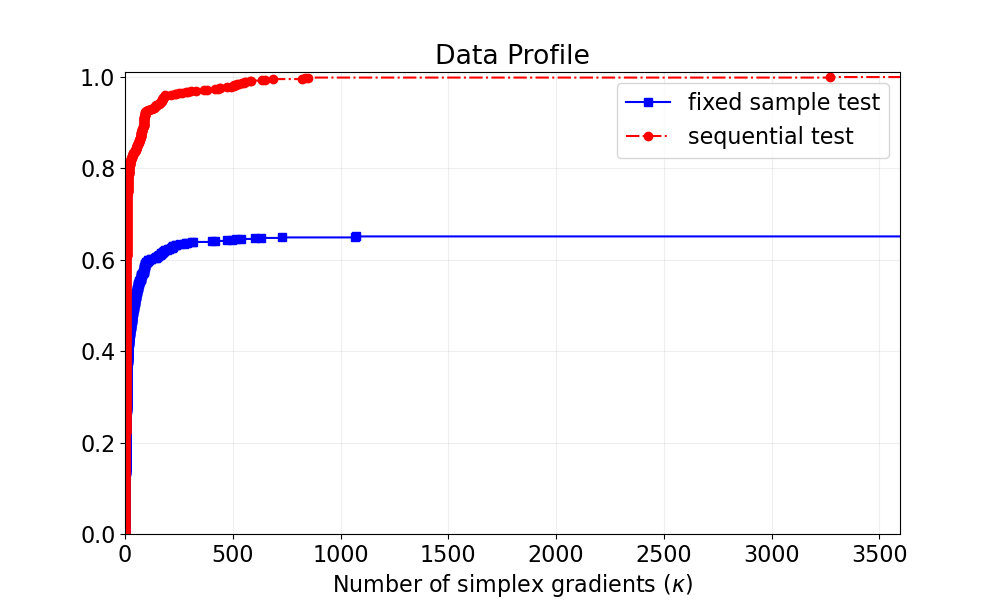}
    \caption{Performance and data profiles for Algorithm~\ref{SDS} using sequential test (Test~\ref{ST}) or fixed sample test (Test~\ref{FT}) for noise variance $\sigma^2=0.01$ for the problem set of Table~\ref{table1}.}
    \label{fig:42}
\end{figure}

\section{Concluding remarks}\label{s5}

In this paper, we introduced sequential hypothesis testing for solving a stochastic DFO problem. Specifically, we formulated the evaluation of the sufficient decrease condition~(\ref{eq2}) as a hypothesis test problem (Problem~\ref{Subp}) and solved it through a sequential hypothesis test (Test~\ref{ST}). Given an additive Gaussian noise assumption, we estimated the sample size of Test~\ref{ST} in Proposition~\ref{prop}, which indicated a possible early termination under a reduced sample size. In particular, when the potential decrease $f(x)-f(x+\delta d)$ is $\Theta(\delta^r)$ for some $r\in(0,2]$, we showed that the expected sample size can decrease from the literature's $\Theta(\delta^{-4})$ to $O(\delta^{-2-r})$.
We applied the sequential test framework to probabilistic-descent direct search and derived an iteration complexity of $O(n\epsilon^{-2})$ in Theorem~\ref{T1}. Our numerical results showed that the use of sequential hypothesis test (Test~\ref{ST}) significantly outperforms the use of its fixed sample counterpart.

The sampling procedure in the form of Test~\ref{FT} is widely used in many stochastic DFO algorithms. It is referred to in the literature of statistics
as a hypothesis test with a fixed sample size. In contrast, a sequential hypothesis test uses a random sample size, which can be regarded as a relaxation of a fixed sample size, to allow early termination of the test when the hypothesis test problem is not difficult (in our case, when the mean $\mu$ of $Y$ in~(\ref{ydefinition}) is far from 0). From this point of view, a test with a fixed sample size is some form of a restricted sequential hypothesis test. Therefore, sequential tests are generally expected to use less samples than their fixed sample size counterparts for equally good performance.

We would like to make two observations regarding the noise setting considered in this paper. The first one is that, in our numerical results in Section~\ref{s4}, 
we only show that Test~\ref{ST} performs well in a relatively limited setting where the noise is additive Gaussian. However, we can still use Test~\ref{ST} for other noise types as long as an upper bound of the noise variance of $Y$ is known. 
Regardless of the distribution of $Y$, the random walk $\sum_{i}Y^i$ is typically approximated by a Brownian motion process in the study of sequential analysis~\cite[Chapter 3.1]{siegmund2013sequential}. Therefore, we believe that Test~\ref{ST} will still work well for other noises beyond Gaussian.
The other observation is that, in this paper, we assumed knowledge of the noise variance or at least an upper bound of it. For a more practical algorithmic implementation, we may need to estimate the noise variance using some estimation techniques (see, for instance,~\cite{more2011estimating,berahas2019derivative}).

\section*{Acknowledgments}
This work is partially supported by the U.S. Air Force Office of Scientific Research (AFOSR) award FA9550-23-1-0217 and the U.S. Office of Naval Research~(ONR) award~N000142412656. We thank Damiano Zeffiro and Trang H. Tran for insights and discussions about sequential sampling.

\bibliographystyle{siam}
\bibliography{reference.bib}

\begin{thebibliography}{10}

\bibitem{achddou2024stochastic}
{\sc J.~Achddou, O.~Cappe, and A.~Garivier}, {\em Stochastic direct search method for blind resource allocation}, Transactions on Machine Learning Research Journal,  (2024).

\bibitem{audet2021stochastic}
{\sc C.~Audet, K.~J. Dzahini, M.~Kokkolaras, and S.~Le~Digabel}, {\em Stochastic mesh adaptive direct search for blackbox optimization using probabilistic estimates}, Computational Optimization and Applications, 79 (2021), pp.~1--34.

\bibitem{bandeira2014convergence}
{\sc A.~S. Bandeira, K.~Scheinberg, and L.~N. Vicente}, {\em Convergence of trust-region methods based on probabilistic models}, SIAM Journal on Optimization, 24 (2014), pp.~1238--1264.

\bibitem{berahas2019derivative}
{\sc A.~S. Berahas, R.~H. Byrd, and J.~Nocedal}, {\em Derivative-free optimization of noisy functions via quasi-{Newton} methods}, SIAM Journal on Optimization, 29 (2019), pp.~965--993.

\bibitem{blanchet2019convergence}
{\sc J.~Blanchet, C.~Cartis, M.~Menickelly, and K.~Scheinberg}, {\em Convergence rate analysis of a stochastic trust-region method via supermartingales}, INFORMS Journal on Optimization, 1 (2019), pp.~92--119.

\bibitem{cartis2018global}
{\sc C.~Cartis and K.~Scheinberg}, {\em Global convergence rate analysis of unconstrained optimization methods based on probabilistic models}, Mathematical Programming, 169 (2018), pp.~337--375.

\bibitem{chen2018stochastic}
{\sc R.~Chen, M.~Menickelly, and K.~Scheinberg}, {\em Stochastic optimization using a trust-region method and random models}, Mathematical Programming, 169 (2018), pp.~447--487.

\bibitem{conn2009introduction}
{\sc A.~R. Conn, K.~Scheinberg, and L.~N. Vicente}, {\em Introduction to Derivative-Free Optimization}, SIAM, 2009.

\bibitem{dodge1929method}
{\sc H.~F. Dodge and H.~G. Romig}, {\em A method of sampling inspection}, The Bell System Technical Journal, 8 (1929), pp.~613--631.

\bibitem{dolan2002benchmarking}
{\sc E.~D. Dolan and J.~J. Mor{\'e}}, {\em Benchmarking optimization software with performance profiles}, Mathematical Programming, 91 (2002), pp.~201--213.

\bibitem{dzahini2022expected}
{\sc K.~J. Dzahini}, {\em Expected complexity analysis of stochastic direct-search}, Computational Optimization and Applications, 81 (2022), pp.~179--200.

\bibitem{dzahini2025direct}
{\sc K.~J. Dzahini, F.~Rinaldi, C.~W. Royer, and D.~Zeffiro}, {\em Direct-search methods in the year 2025: Theoretical guarantees and algorithmic paradigms}, EURO Journal on Computational Optimization,  (2025), p.~100110.

\bibitem{giovannelli2024limitation}
{\sc T.~Giovannelli, O.~Sohab, and L.~N. Vicente}, {\em The limitation of neural nets for approximation and optimization}, Journal of Global Optimization,  (2024), pp.~1--30.

\bibitem{gould2015cutest}
{\sc N.~I. Gould, D.~Orban, and P.~L. Toint}, {\em {CUTEst}: a constrained and unconstrained testing environment with safe threads for mathematical optimization}, Computational Optimization and Applications, 60 (2015), pp.~545--557.

\bibitem{gratton2015direct}
{\sc S.~Gratton, C.~W. Royer, L.~N. Vicente, and Z.~Zhang}, {\em Direct search based on probabilistic descent}, SIAM Journal on Optimization, 25 (2015), pp.~1515--1541.

\bibitem{gratton2025s2mpj}
{\sc S.~Gratton and P.~L. Toint}, {\em {S2MPJ and CUTEst} optimization problems for matlab, python and julia}, Optimization Methods and Software,  (2025), pp.~1--33.

\bibitem{ha2025iteration}
{\sc Y.~Ha and S.~Shashaani}, {\em Iteration complexity and finite-time efficiency of adaptive sampling trust-region methods for stochastic derivative-free optimization}, IISE Transactions, 57 (2025), pp.~541--555.

\bibitem{larson2016stochastic}
{\sc J.~Larson and S.~C. Billups}, {\em Stochastic derivative-free optimization using a trust region framework}, Computational Optimization and Applications, 64 (2016), pp.~619--645.

\bibitem{larson2019derivative}
{\sc J.~Larson, M.~Menickelly, and S.~M. Wild}, {\em Derivative-free optimization methods}, Acta Numerica, 28 (2019), pp.~287--404.

\bibitem{more2009benchmarking}
{\sc J.~J. Mor{\'e} and S.~M. Wild}, {\em Benchmarking derivative-free optimization algorithms}, SIAM Journal on Optimization, 20 (2009), pp.~172--191.

\bibitem{more2011estimating}
\leavevmode\vrule height 2pt depth -1.6pt width 23pt, {\em Estimating computational noise}, SIAM Journal on Scientific Computing, 33 (2011), pp.~1292--1314.

\bibitem{nesterov2013introductory}
{\sc Y.~Nesterov}, {\em Introductory Lectures on Convex Optimization: A basic course}, vol.~87, Springer Science \& Business Media, 2013.

\bibitem{rinaldi2024stochastic}
{\sc F.~Rinaldi, L.~N. Vicente, and D.~Zeffiro}, {\em Stochastic trust-region and direct-search methods: A weak tail bound condition and reduced sample sizing}, SIAM Journal on Optimization, 34 (2024), pp.~2067--2092.

\bibitem{shashaani2018astro}
{\sc S.~Shashaani, F.~S. Hashemi, and R.~Pasupathy}, {\em {ASTRO-DF}: A class of adaptive sampling trust-region algorithms for derivative-free stochastic optimization}, SIAM Journal on Optimization, 28 (2018), pp.~3145--3176.

\bibitem{siegmund2013sequential}
{\sc D.~Siegmund}, {\em Sequential Analysis: Tests and Confidence Intervals}, Springer Science \& Business Media, 2013.

\bibitem{wald1992sequential}
{\sc A.~Wald}, {\em Sequential tests of statistical hypotheses}, in Breakthroughs in statistics: Foundations and basic theory, Springer, 1992, pp.~256--298.

\bibitem{wald2004sequential}
\leavevmode\vrule height 2pt depth -1.6pt width 23pt, {\em Sequential Analysis}, Courier Corporation, 2004.

\bibitem{wald1948optimum}
{\sc A.~Wald and J.~Wolfowitz}, {\em Optimum character of the sequential probability ratio test}, The Annals of Mathematical Statistics,  (1948), pp.~326--339.

\end{thebibliography}

\begin{appendices}
\section{Appendix (Auxiliary results)}
\begin{proposition}
    Let $A>1$ be a real number. Then $\frac{A^x-1}{x(A^x+1)}\leq\frac{\log{A}}{2}$ holds for any $x\in\mathbb{R}$.
\end{proposition}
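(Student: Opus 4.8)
The plan is to eliminate the parameter $A$ by a logarithmic substitution and thereby reduce the claim to a single universal inequality. Setting $t=x\log A$ and writing $L=\log A>0$ (positive because $A>1$), we have $A^x=e^{t}$ and $x=t/L$, so the left-hand side becomes
\begin{align*}
\frac{A^x-1}{x(A^x+1)}=\frac{L(e^t-1)}{t(e^t+1)}.
\end{align*}
Since $L>0$, the asserted bound $\frac{A^x-1}{x(A^x+1)}\le \frac{L}{2}$ is equivalent, after dividing by $L$, to
\begin{align*}
\frac{e^t-1}{t(e^t+1)}\le\frac12,
\end{align*}
which no longer involves $A$. (At $x=0$ the original quotient is read as its limiting value $\frac{\log A}{2}$, so equality holds there.)

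Next I would exploit a symmetry and a convenient closed form. The function $g(t):=\frac{e^t-1}{t(e^t+1)}$ is even: multiplying numerator and denominator of $g(-t)$ by $e^{t}$ recovers $g(t)$, so it suffices to treat $t>0$. Moreover, multiplying through by $e^{-t/2}$ gives $\frac{e^t-1}{e^t+1}=\tanh(t/2)$, hence $g(t)=\frac{\tanh(t/2)}{t}$, and for $t>0$ the target inequality $g(t)\le\frac12$ is exactly $\tanh(t/2)\le t/2$.

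Finally I would invoke the standard elementary bound $\tanh u\le u$ for $u\ge0$: the function $u\mapsto u-\tanh u$ vanishes at $u=0$ and has derivative $1-(1-\tanh^2 u)=\tanh^2 u\ge0$, so it is nondecreasing and thus nonnegative on $[0,\infty)$. Taking $u=t/2$ yields $\tanh(t/2)\le t/2$ for $t>0$, the desired bound, and evenness extends it to all $t$. (Equivalently, avoiding hyperbolic functions, one may clear denominators and study $h(t)=t(e^t+1)-2(e^t-1)$, checking $h(0)=h'(0)=0$ and $h''(t)=te^t>0$ for $t>0$.)

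There is no substantial obstacle here once the substitution $t=x\log A$ is in place: it collapses the $A$-dependence to a clean one-variable monotonicity statement. The only point meriting care is the behavior at $x=0$, where the two sides coincide and the original expression must be interpreted by its limit $\frac{\log A}{2}$.
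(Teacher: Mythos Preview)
Your proof is correct. Both you and the paper exploit the evenness of the quotient to restrict to nonnegative arguments and then verify a one-variable inequality by a second-derivative monotonicity check. The genuine difference is your substitution $t=x\log A$: it removes the parameter $A$ entirely and lets you recognise the left side as $\tanh(t/2)/t$, so the claim collapses to the textbook bound $\tanh u\le u$. The paper instead clears denominators in the original variables and studies $g(x)=xA^x\log A+x\log A+2-2A^x$, computing $g''(x)\ge0$ and $g'(0)=g(0)=0$. Your parenthetical alternative $h(t)=t(e^t+1)-2(e^t-1)$ is exactly the paper's $g(x)$ after the change of variable, so the two arguments are really the same convexity computation in different coordinates; your $\tanh$ route just packages the final step more cleanly.
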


\begin{proof}
    To prove $\frac{A^x-1}{x(A^x+1)}\leq\frac{\log A}{2}$ for any $x\in\mathbb{R}$, since $\frac{A^{-x}-1}{-x(A^{-x}+1)}=\frac{A^x-1}{x(A^x+1)}$ is a symmetric function, it suffices to prove it when $x\geq0$. We rearrange and rewrite it as follows
\begin{align*}
    xA^x\log A+x\log A+2-2A^x\geq0.
\end{align*}
The derivatives of $g(x)=xA^x\log A+x\log A+2-2A^x$ are
\begin{align*}
    g'(x)&=(xA^x\log A+1-A^x)\log A\\
    g''(x)&=(xA^x\log A)(\log A)^2\geq0.
\end{align*}
It follows that $g'(x)$ is non-decreasing and $g'(x)\geq g'(0)=0$. Then $g(x)$ is non-decreasing and $g(x)\geq g(0)=0$.
\end{proof}

\begin{proposition}
    Let $t>0$ be a real number. Then $\frac{1}{t^x}\leq\frac{1}{x}$ holds for any $x\geq1$ if and only if $t\geq e^{\frac{1}{e}}$.
\end{proposition}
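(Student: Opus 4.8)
The plan is to reduce the two-sided inequality to a statement about the maximum of a single-variable function. First I would rewrite $\frac{1}{t^x}\leq\frac{1}{x}$ in a more convenient form. Since $x\geq1>0$ and $t^x>0$, multiplying both sides by the positive quantity $x\,t^x$ preserves the inequality and yields the equivalent form $x\leq t^x$. Taking logarithms (again legitimate, as both sides are positive) this becomes $\log x\leq x\log t$, i.e., $\frac{\log x}{x}\leq\log t$. Hence the statement ``the inequality holds for every $x\geq1$'' is equivalent to $\log t\geq\sup_{x\geq1}\frac{\log x}{x}$.

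The core of the argument is therefore to compute $\sup_{x\geq1}h(x)$ where $h(x)=\frac{\log x}{x}$. I would differentiate to obtain $h'(x)=\frac{1-\log x}{x^2}$, which is positive for $1\leq x<e$, zero at $x=e$, and negative for $x>e$. Thus $h$ increases on $[1,e]$ and decreases on $[e,\infty)$, so it attains its maximum over $[1,\infty)$ at the interior point $x=e$, with value $h(e)=\frac{1}{e}$. This identifies $\sup_{x\geq1}h(x)=\frac{1}{e}$, and the supremum is attained.

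With this in hand the equivalence follows by chasing both directions. For the ``if'' direction, if $t\geq e^{1/e}$ then $\log t\geq\frac{1}{e}\geq h(x)$ for all $x\geq1$, so $\frac{\log x}{x}\leq\log t$ holds throughout, and the original inequality follows by reversing the reductions above. For the ``only if'' direction, I would use that the maximum is actually attained at $x=e$: if $\frac{\log x}{x}\leq\log t$ holds for every $x\geq1$, then evaluating at $x=e$ gives $\frac{1}{e}\leq\log t$, which is exactly $t\geq e^{1/e}$.

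Since each step is elementary, there is no serious obstacle; the only point requiring care is pinning down that the supremum equals $\frac{1}{e}$ and is attained at $x=e$, rather than at the endpoint $x=1$ or in the limit $x\to\infty$ where $h$ tends to $0$. This attainment is precisely what powers the ``only if'' direction, since it lets me extract the sharp threshold by testing a single value of $x$.
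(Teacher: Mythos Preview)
Your proof is correct and actually cleaner than the paper's. The paper works directly with $f(x)=t^x-x$: after first restricting to $t\geq\sqrt{2}$ by testing $x=2$, it uses convexity of $f$ (which needs $t>1$) to locate the minimizer at $x^*=-\frac{\log\log t}{\log t}$, splits into the cases $x^*\leq1$ and $x^*\geq1$, and in the latter case evaluates $f(x^*)=\frac{1+\log\log t}{\log t}$ to read off the threshold. Your reformulation $\frac{\log x}{x}\leq\log t$ separates $t$ from $x$ entirely, so you only need to maximize the $t$-free function $h(x)=\frac{\log x}{x}$ once; this bypasses both the preliminary reduction to $t\geq\sqrt{2}$ and the case split on the location of the $t$-dependent critical point. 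The paper's approach would generalize more readily if the inequality did not decouple so nicely, but for this statement your route is shorter and makes the sharpness at $x=e$ transparent.
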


\begin{proof}
    Since $\frac{1}{t^x}\leq\frac{1}{x}$ holds for $x=2$, we only need to consider $t\geq\sqrt{2}$. Define $f(x)=t^x-x$. Its derivatives are $f'(x)=t^x\log{t}-1$ and $f''(x)=t^x(\log{t})^2>0$. The solution $x^*$ of $f'(x)=0$ is $x^*=-\frac{\log{\log{t}}}{\log{t}}$. If $x^*\leq1$, then $f(x)$ achieves a minimum over $[1,\infty)$ at $x=1$. We can verify that $f(1)>0$. If $x^*\geq1$, then $f(x)$ achieves a minimum over $[1,\infty)$ at $x^*$. The minimum value is $f(x^*)=\frac{1+\log{\log{t}}}{\log{t}}$. Therefore, we have that $f(x)\geq0$ holds for any $x\in[1,\infty)$ if and only if $t\geq e^{\frac{1}{e}}$.
\end{proof}

\section{Appendix (Renewal–Reward Martingale Process)}
We begin by describing how our Algorithm~\ref{SDS} can be analyzed through the lens of a renewal-reward process. A renewal event is said to occur within Algorithm~\ref{SDS} whenever the stepsize meets or exceeds a predetermined threshold, denoted by $\Delta_\epsilon$. We will show that, after one renewal, the next renewal will happen in a finite time interval, and this expected returning time is constant. Each of these renewal events is associated with a random reward, whose expectation is bounded below by a positive function $h(\Delta_\epsilon)$. Once defined in this way, the total accumulated reward across multiple renewals can be viewed as a submartingale that grows through these random increments. Since the total available reward is bounded, we will be able to deduce an expected stopping time for the algorithm mechanism. This line of analysis, which treats the underlying process as a renewal-reward martingale, was first developed in \cite{blanchet2019convergence}, where it was used to establish the expected global convergence rate of a stochastic trust-region method.

To adapt the framework~\cite{blanchet2019convergence} to our Algorithm~\ref{SDS}, we need to remove one of the assumptions from \cite[Assumption 1(ii)]{blanchet2019convergence}, which originally stipulated that $p>1/2$. We also need to show that the main result in \cite[Theorem 2]{blanchet2019convergence} still holds under more general conditions where the sequence~$W_{k+1}$ can take any positive and negative values $a$ and $b$, respectively, instead of $\pm1$ as the authors used in~\cite{blanchet2019convergence}.

Consider a stochastic process $\{(\Phi_k,\Delta_k)\}$ defined on some probability space, where $\Phi_k$ takes values in the interval $[0,\infty)$ and $\Delta_k$ takes values in the interval $(0,\infty)$, for all $k\geq0$. Let $\{W_k\}$ be another sequence of random variables defined on the same probability space as $\{(\Phi_k,\Delta_k)\}$, initialized by $W_0=0$. For all $k\geq0$, the conditional distribution of $W_{k+1}$, given the $\sigma$-algebra $\mathcal{F}_k$ generated by $\{(\Phi_0,\Delta_0,W_0),\ldots,(\Phi_k,\Delta_k,W_k)\}$, is described by
\begin{equation}\label{pdefinition}
\begin{aligned}
   P(W_{k+1}=a|\mathcal{F}_k)&=p,\\
   P(W_{k+1}=b|\mathcal{F}_k)&=1-p,
\end{aligned}
\end{equation}
where $a>0$ and $b<0$ are two constants and $p$ is the probability of taking the value $a$. When $a=1$ and $b=-1$, this construction coincides with the specific case discussed in~\cite{blanchet2019convergence}. From the above definition, it follows that $\{W_k\}$ are mutually independent and $W_k$ is also independent of the sequence $\{(\Phi_j,\Delta_j)\}_{j=0}^{k-1}$ for all $k$. Lastly, let $\{T_\epsilon\}_{\epsilon>0}$ be a family of stopping times with respect to $\{\mathcal{F}_k\}_{k\geq0}$, parameterized by some quantity $\epsilon>0$. As in~\cite{blanchet2019convergence}, we impose the following assumptions on $\{(\Phi_k,\Delta_k)\}$ and $T_\epsilon$ when $k<T_\epsilon$.

\begin{assumption}\label{BA1}
    There exists a constant $\Delta_\epsilon>0$ such that the following holds for all $k\geq0$
\begin{align*}
    \mathbf{1}(k<T_\epsilon)\Delta_{k+1}\geq\mathbf{1}(k<T_\epsilon)\min(\Delta_ke^{W_{k+1}},\Delta_\epsilon),
\end{align*}
where $W_{k+1}$ satisfies $E[W_{k+1}|\mathcal{F}_k]>0$ (which means $p(a-b)+b>0$).
\end{assumption}

\begin{assumption}\label{BA2}
    There exists a nondecreasing function $h(\cdot):[0,\infty)\rightarrow(0,\infty)$ such that
\begin{align*}
    E(\Phi_k-\Phi_{k+1}|\mathcal{F}_k)\mathbf{1}(k<T_\epsilon)\geq h(\Delta_k)\mathbf{1}(k<T_\epsilon).
\end{align*}
\end{assumption}

Assumption~\ref{BA1} tells us that for $k<T_\epsilon$, the stepsize $\Delta_k$ tends to increase and return to the threshold $\Delta_\epsilon$ when it is smaller than $\Delta_\epsilon$. Assumption~\ref{BA2} tells us that for $k<T_\epsilon$, the stochastic process $\Phi_0-\Phi_{k+1}$ acts like a submartingale and the expected martingale difference $E(\Phi_k-\Phi_{k+1}|\mathcal{F}_k)$ is at least $h(\Delta_k)$.

In order to define a renewal process, we first define an auxiliary process $\{Z_k\}_{k=0}^{\infty}$ by letting $Z_0=\log\frac{\Delta_\epsilon}{\Delta_0}$ and setting
\begin{align*}
    Z_{k+1}=\min(Z_k+W_{k+1},\log\frac{\Delta_\epsilon}{\Delta_0}),
\end{align*}
or, equivalently,
\begin{align*}
    \Delta_0e^{Z_{k+1}}=\min(\Delta_0e^{Z_k+W_{k+1}},\Delta_\epsilon).
\end{align*}
We then define the renewal process
$\{A_n\}_{n=0}^{\infty}$ by letting $A_0=0$ and setting $A_n=\inf\{m>A_{n-1}:Z_m=\log\frac{\Delta_\epsilon}{\Delta_0}\}$. From Assumption~\ref{BA1}, we have that
\begin{align*}
    \mathbf{1}(k<T_\epsilon)\Delta_{k+1}\geq\mathbf{1}(k<T_\epsilon)\min(\Delta_ke^{W_{k+1}},\Delta_\epsilon)\geq\mathbf{1}(k<T_\epsilon)\Delta_{0}e^{Z_{k+1}},
\end{align*}
where we have used a simple inductive argument to obtain the second inequality. The interarrival times of this renewal process are defined for all $n\geq1$ by
\begin{align*}
    \tau_n=A_n-A_{n-1}.
\end{align*}

The first main step in the analysis will be to bound the expected value of the interarrival time~$\tau_n$ (see Lemma~\ref{LB2}). For this purpose, one needs to bound $E[\bar{\tau}]$, where $\bar{\tau}=\inf\{n\geq0:\bar{Z}_n\geq0\}$, using the structure of the process $W_k$ (see Lemma~\ref{LB1} below).

\begin{lemma}
    Let Assumption~\ref{BA1} hold. Define the process $\bar{Z}_0=b<0$,\label{LB1} $\bar{Z}_{k+1}=\bar{Z}_{k}+W_{k+1}$ for all $k\geq0$. Then
\begin{align}
    E[\bar{\tau}]\leq\frac{a-b}{pa-pb+b}.\label{Beq1}
\end{align}
\end{lemma}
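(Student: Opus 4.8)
The plan is to recognize $\{\bar{Z}_k\}$ as a one-dimensional random walk started at $\bar{Z}_0 = b < 0$ whose i.i.d.\ increments $W_k$ take the value $a$ with probability $p$ and the value $b$ with probability $1-p$, and then to control the first passage time $\bar{\tau}$ into $[0,\infty)$ by a single application of Wald's identity (equivalently, optional stopping). The structural fact I would exploit is that Assumption~\ref{BA1} forces a strictly positive drift: writing $\mu_W := E[W_{k+1}] = pa + (1-p)b = pa - pb + b$, the hypothesis $E[W_{k+1}|\mathcal{F}_k] > 0$ is precisely $\mu_W > 0$, which is also the positive quantity appearing in the denominator of~\eqref{Beq1}.

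First I would verify that $\bar{\tau}$ is an almost surely finite stopping time with finite expectation, since this integrability is exactly what legitimizes Wald's identity and is the only nontrivial technical point. Because the increments are bounded in $[b,a]$ and have positive mean, the event $\{\bar{\tau} > n\}$ is contained in $\{\bar{Z}_n < 0\} = \{\sum_{i=1}^n W_i < -b\}$, and a Hoeffding bound for sums of bounded independent variables gives $P(\bar{\tau} > n) \le \exp\!\big(-2(n\mu_W + b)^2/(n(a-b)^2)\big)$ for every $n$ large enough that $n\mu_W + b > 0$. This tail decays geometrically, so $E[\bar{\tau}] = \sum_{n \ge 0} P(\bar{\tau} > n) < \infty$; alternatively one may invoke the standard fact that a random walk with positive drift and bounded steps has a first passage time of finite mean.

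Next I would apply Wald's identity to the stopped walk. Since $E[\bar{\tau}] < \infty$ and the increments are bounded, optional stopping applied to the martingale $M_k := \bar{Z}_k - b - k\mu_W$ (which has $M_0 = 0$) yields $E[M_{\bar{\tau}}] = 0$, that is,
\begin{align*}
    E[\bar{Z}_{\bar{\tau}}] = b + \mu_W\, E[\bar{\tau}].
\end{align*}
It remains to bound the stopped value $\bar{Z}_{\bar{\tau}}$. By definition $\bar{Z}_{\bar{\tau}} \ge 0$, and since $b<0$ we have $\bar{\tau}\ge 1$, so $\bar{Z}_{\bar{\tau}-1} < 0$ is well defined; the crossing step $W_{\bar{\tau}}$ must equal $a$, because a step of size $b<0$ could not raise the walk into $[0,\infty)$ from a negative value, whence $\bar{Z}_{\bar{\tau}} = \bar{Z}_{\bar{\tau}-1} + a < a$. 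Taking expectations gives $E[\bar{Z}_{\bar{\tau}}] \le a$, and substituting into the displayed identity produces
\begin{align*}
    \mu_W\, E[\bar{\tau}] = E[\bar{Z}_{\bar{\tau}}] - b \le a - b,
\end{align*}
so that $E[\bar{\tau}] \le (a-b)/\mu_W = (a-b)/(pa - pb + b)$, which is exactly~\eqref{Beq1}.

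The hard part is the integrability of $\bar{\tau}$; once that is in hand, Wald's identity together with the elementary overshoot estimate $\bar{Z}_{\bar{\tau}} < a$ closes the argument at once. The bounded step size $a$ caps the excess above the boundary regardless of the realization, and it is precisely this uniform overshoot bound that converts the exact renewal identity into the stated inequality.
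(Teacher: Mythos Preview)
Your argument is correct and rests on the same core idea as the paper's proof: apply optional stopping/Wald's identity to the drift-compensated walk and combine with the overshoot bound $\bar{Z}_{\bar\tau}\le a$ to obtain $\mu_W\,E[\bar\tau]\le a-b$.

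The one point of departure is how the integrability of $\bar\tau$ is handled. You first establish $E[\bar\tau]<\infty$ externally, via a Hoeffding tail bound, and then invoke Wald's identity in its usual form. The paper instead never proves $E[\bar\tau]<\infty$ a priori: it works throughout with the truncated time $k\wedge\bar\tau$, shows that $R_k=\bar{Z}_{k\wedge\bar\tau}-v(k\wedge\bar\tau)$ is a genuine martingale with $E[R_k]=b$, deduces $E[v(k\wedge\bar\tau)]\le a-b$ for every $k$, and then lets $k\to\infty$ by monotone convergence. This yields $E[\bar\tau]\le(a-b)/v$ directly, with finiteness falling out as a byproduct rather than a prerequisite. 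Your route is the more textbook one and makes the role of positive drift explicit through the concentration step; the paper's truncation-plus-monotone-convergence route is slightly more self-contained and avoids any appeal to tail estimates.
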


\begin{proof}
For ease of notation, let $k\wedge\bar{\tau}=\min\{k,\bar{\tau}\}$ and $v=p(a-b)+b>0$. Note that
\begin{align}
    E[W_{k+1}|\mathcal{F}_k]=v.\label{B10}
\end{align}
Consider the stochastic process defined by $R_0=\bar{Z}_0$ and for $k\geq1$
\begin{align*}
    R_k=\bar{Z}_{k\wedge\bar{\tau}}-\sum_{j=0}^{k\wedge\bar{\tau}-1}v.
\end{align*}

We first prove that $E[R_{k+1}|\mathcal{F}_k]=R_k$, which means $R_k$ is a martingale with respect to $\{\mathcal{F}_k\}$. To see this, we first note that
\begin{align*}
    R_{k+1}-R_{k}=\bar{Z}_{(k+1)\wedge\bar{\tau}}-\bar{Z}_{k\wedge\bar{\tau}}-\sum_{j=0}^{(k+1)\wedge\bar{\tau}-1}v+\sum_{j=0}^{k\wedge\bar{\tau}-1}v
\end{align*}
and
\begin{align}
    E[R_{k+1}-R_{k}|\mathcal{F}_k]=E[(R_{k+1}-R_k)\mathbf{1}(\bar{\tau}>k)|\mathcal{F}_k]+E[(R_{k+1}-R_k)\mathbf{1}(\bar{\tau}\leq k)|\mathcal{F}_k].\label{B11}
\end{align}
We now show that $E[R_{k+1}-R_{k}|\mathcal{F}_k]=0$. First, since $\left((k\wedge\bar{\tau})-((k+1)\wedge\bar{\tau})\right)\mathbf{1}(\bar{\tau}\leq k)=0$, we have
\begin{align}
    E[(R_{k+1}-R_k)\mathbf{1}(\bar{\tau}\leq k)|\mathcal{F}_k]&=E\left[\left(\bar{Z}_{(k+1)\wedge\bar{\tau}}-\bar{Z}_{k\wedge\bar{\tau}}-\sum_{j=0}^{(k+1)\wedge\bar{\tau}-1}v+\sum_{j=0}^{k\wedge\bar{\tau}-1}v\right)\mathbf{1}(\bar{\tau}\leq k)|\mathcal{F}_k\right]\notag\\
    &=E\left[0\cdot\mathbf{1}(\bar{\tau}\leq k)|\mathcal{F}_k\right]\notag\\
    &=0.\label{B12}
\end{align}
Secondly, from $\bar{Z}_{k+1}=\bar{Z}_{k}+W_{k+1}$ and~(\ref{B10}), we have
\begin{align}
    E[(R_{k+1}-R_k)\mathbf{1}(\bar{\tau}>k)|\mathcal{F}_k]=&E\left[\left(\bar{Z}_{(k+1)\wedge\bar{\tau}}-\bar{Z}_{k\wedge\bar{\tau}}-\sum_{j=0}^{(k+1)\wedge\bar{\tau}-1}v+\sum_{j=0}^{k\wedge\bar{\tau}-1}v\right)\mathbf{1}(\bar{\tau}>k)|\mathcal{F}_k\right]\notag\\
    =&E\left[\left(\bar{Z}_{k+1}-\bar{Z}_{k}-v\right)\mathbf{1}(\bar{\tau}>k)|\mathcal{F}_k\right]\notag\\
    =&E\left[\left(W_{k+1}-v\right)\mathbf{1}(\bar{\tau}>k)|\mathcal{F}_k\right]\notag\\
    =&0.\label{B13}
\end{align}
After summing up~(\ref{B12}) and~(\ref{B13}), since $R_k$ is $\mathcal{F}_k$-measurable, we have from~(\ref{B11}) that
\begin{align*}
    E[R_{k+1}|\mathcal{F}_k]=R_k.
\end{align*}

Since $R_k$ is a martingale with respect to $\{\mathcal{F}_k\}$, we immediately have $E[R_k]=R_0$. Note that $\bar{Z}_{k\wedge\bar{\tau}}\leq a$ for each $k\geq0$ due to the definition of $\bar{\tau}$ and $W_k$. We then obtain from the definition of $R_k$ that
\begin{align}
    E\left(\sum_{j=0}^{(k\wedge \bar{\tau})-1}v\right)=E[\bar{Z}_{k\wedge\bar{\tau}}]-E[R_k]\leq a-R_0=a-b.\label{Ap1}
\end{align}
Now, due to $v>0$ and $k$ being eventually larger than $\bar{\tau}$, observe that
\begin{align*}
    0<\sum_{j=0}^{(k\wedge \bar{\tau})-1}v\nearrow\sum_{j=0}^{\bar{\tau}-1}v
\end{align*}
as $k\rightarrow\infty$. Note that this conclusion holds even on the event $\{\bar{\tau}=\infty\}$. Therefore, by the monotone convergence theorem and~(\ref{Ap1}),
\begin{align*}
    E\left(\sum_{j=0}^{\bar{\tau}-1}v\right)=\lim_{k\rightarrow\infty}E\left(\sum_{j=0}^{(k\wedge \bar{\tau})-1}v\right)\leq a-b.
\end{align*}
Finally, using Wald's identity, we have
\begin{align*}
    E[\bar{\tau}]v=E\left(\sum_{j=0}^{\bar{\tau}-1}v\right)\leq a-b,
\end{align*}
which implies~(\ref{Beq1}) and concludes the proof.
\end{proof}

We will use the upper bound for the constant $E[\bar{\tau}]$ in the above lemma to give an upper bound for the constant $E[\tau_n]$.

\begin{lemma}
    Let Assumption~\ref{BA1} hold. Let $\tau_n$ be defined as before. Then for all $n$,\label{LB2}
\begin{align}
    E[\tau_n]=p+(1+E[\bar{\tau}])(1-p).\label{Beq2}
\end{align}
\end{lemma}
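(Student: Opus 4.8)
The plan is to split $\tau_n$ according to the very first increment $W_{A_{n-1}+1}$ taken right after the renewal at time $A_{n-1}$. Write $c=\log\frac{\Delta_\epsilon}{\Delta_0}$ for the ceiling value, so that $Z_{A_{n-1}}=c$ by the definition of $A_{n-1}$. The one-step update reads $Z_{A_{n-1}+1}=\min(c+W_{A_{n-1}+1},c)$. Since $A_{n-1}$ is a stopping time and each $W_{k+1}$ is independent of $\mathcal{F}_k$ with the two-point law in~(\ref{pdefinition}), the increments $W_{A_{n-1}+1},W_{A_{n-1}+2},\ldots$ are again i.i.d.\ copies of the same distribution; this independence is precisely what lets me treat the post-renewal dynamics as a fresh run of the walk, and it is also why the resulting expression will not depend on $n$.

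First I would handle the event $\{W_{A_{n-1}+1}=a\}$, which has probability $p$. Because $a>0$, the cap is active and $Z_{A_{n-1}+1}=\min(c+a,c)=c$, so the process is immediately back at the ceiling, $A_n=A_{n-1}+1$, and hence $\tau_n=1$ on this event.

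Next I would treat the complementary event $\{W_{A_{n-1}+1}=b\}$, of probability $1-p$. Here $b<0$ gives $Z_{A_{n-1}+1}=c+b<c$, which is not a renewal, so additional steps are required. I would introduce the shifted process $\bar{Z}_j=Z_{A_{n-1}+1+j}-c$, so that $\bar{Z}_0=b$ and, as long as the ceiling has not yet been reached, $\bar{Z}_{j+1}=\bar{Z}_j+W_{A_{n-1}+2+j}$. The key observation is that the $\min$ in the definition of $Z$ stays inert until the first index $j$ with $\bar{Z}_j\geq 0$, at which instant $Z$ clamps to $c$ and a renewal occurs. Thus the shifted process coincides exactly with the unrestricted walk of Lemma~\ref{LB1}, driven by fresh i.i.d.\ increments, and the number of steps needed is precisely $\bar{\tau}=\inf\{j\geq 0:\bar{Z}_j\geq 0\}$, with the same law as in Lemma~\ref{LB1}. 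Consequently $A_n=A_{n-1}+1+\bar{\tau}$ and $\tau_n=1+\bar{\tau}$ on this event.

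Combining the two cases by the law of total expectation then gives
\begin{align*}
    E[\tau_n]=p\cdot 1+(1-p)\,E[1+\bar{\tau}]=p+(1+E[\bar{\tau}])(1-p),
\end{align*}
which is the claimed identity~(\ref{Beq2}); finiteness is guaranteed because Lemma~\ref{LB1} bounds $E[\bar{\tau}]$ under Assumption~\ref{BA1}. The main obstacle is the bookkeeping in the second case: one must verify that the capping operator genuinely does nothing until the walk first reaches $0$ (so the return time is exactly $\bar{\tau}$ and is not shortened by premature clamping), and that the stopping-time/independence structure legitimately turns the post-renewal increments into a fresh i.i.d.\ sequence, so that the value $E[\bar{\tau}]$ from Lemma~\ref{LB1} transfers verbatim and the identity holds for every $n$.
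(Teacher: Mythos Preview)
Your proposal is correct and follows essentially the same approach as the paper: condition on the first post-renewal increment $W_{A_{n-1}+1}$, note that on $\{W=a\}$ the cap forces an immediate return while on $\{W=b\}$ the remaining return time is a copy of $\bar{\tau}$ from Lemma~\ref{LB1}, and combine. The only cosmetic difference is that the paper first reduces to $n=1$ via $E[\tau_n]=E[\tau_1]$ and then conditions on $W_1$, whereas you restart at the stopping time $A_{n-1}$ directly; the underlying argument is the same.
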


\begin{proof}
Note that $E[\tau_n]=E[E[\tau_n|Z_{A_{n-1}}]]=E[E[\tau_1|Z_{0}]]=E[\tau_1]$ and it suffices to verify this proposition for $n=1$.

By conditioning on $W_1$, we have that
\begin{align*}
    E[\tau_1]=1\cdot P(W_1=a)+(1+E[\bar{\tau}])P(W_1=b).
\end{align*}
This identity follows because the distribution of $\tau_1$ conditioned on $Z_1=\log\frac{\Delta_\epsilon}{\Delta_0}+b$ is the same as the distribution of $\bar{\tau}$. Thus, we simplify this expression to conclude that~(\ref{Beq2}) holds.
\end{proof}

The following proposition is proved in \cite[Theorem 2]{blanchet2019convergence} with $E[\tau_n]=p/(2p-1)$. The argument in \cite[Theorem 2]{blanchet2019convergence} works for any constant $E[\tau_n]$ and there is no need to repeat the proof here.

\begin{theorem}\label{BP}
    Let Assumptions~\ref{BA1}--\ref{BA2} hold. Then
\begin{align*}
    E[T_\epsilon-1]\leq E[\tau_n]\cdot\frac{\Phi_0}{h(\Delta_\epsilon)},
\end{align*}
\end{theorem}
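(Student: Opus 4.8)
The plan is to realize $\{(\Phi_k,\Delta_k)\}$ as a renewal-reward supermartingale: the renewal epochs $A_n$ of Lemma~\ref{LB2} mark the times at which the stepsize has returned to the threshold $\Delta_\epsilon$, and at each such epoch the merit function $\Phi$ is guaranteed a definite expected decrease. Bounding the total accumulated decrease by $\Phi_0$ will cap the expected number of renewals occurring before $T_\epsilon$, and the constant interarrival mean $E[\tau_n]$ supplied by Lemma~\ref{LB2} will convert that count back into a bound on $E[T_\epsilon]$.

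First I would extract the reward available at a renewal. For any renewal epoch $A_n<T_\epsilon$ one has $Z_{A_n}=\log(\Delta_\epsilon/\Delta_0)$ by construction, so the chain derived from Assumption~\ref{BA1}, namely $\mathbf{1}(k<T_\epsilon)\Delta_{k+1}\ge\mathbf{1}(k<T_\epsilon)\Delta_0 e^{Z_{k+1}}$, forces $\Delta_{A_n}\ge\Delta_\epsilon$. Since $h$ is nondecreasing, $h(\Delta_{A_n})\ge h(\Delta_\epsilon)$, and this is the only point at which the threshold enters. Next I would form the stopped process
\begin{align*}
    M_k = \Phi_{k\wedge T_\epsilon} + h(\Delta_\epsilon)\,R_{k\wedge T_\epsilon},
\end{align*}
where $R_k$ counts the renewal epochs strictly before time $k$ other than the trivial epoch $A_0=0$. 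On $\{k<T_\epsilon\}$, Assumption~\ref{BA2} gives $E[\Phi_{k+1}-\Phi_k\mid\mathcal{F}_k]\le-h(\Delta_k)$; when $k$ is a renewal this decrease is at least $h(\Delta_\epsilon)$ and exactly offsets the unit increment of $R$, while at non-renewal steps only the nonpositive decrease term appears. Hence $M_k$ is a nonnegative supermartingale with $E[M_k]\le M_0=\Phi_0$. Letting $k\to\infty$ and using $\Phi\ge0$ then yields $h(\Delta_\epsilon)\,E[N_\epsilon]\le\Phi_0$, where $N_\epsilon$ is the number of post-initial renewals occurring before $T_\epsilon$.

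It remains to turn $E[N_\epsilon]$ into a bound on $E[T_\epsilon]$, and this is exactly where an arbitrary $E[\tau_n]$ is accommodated. Because $Z$ is reset to $\log(\Delta_\epsilon/\Delta_0)$ at every renewal, Lemma~\ref{LB2} shows $E[\tau_{n+1}\mid\mathcal{F}_{A_n}]=E[\tau_n]$, a constant independent of the past; consequently $\sum_{j=1}^{n}(\tau_j-E[\tau_n])$ is a martingale for the embedded filtration $\{\mathcal{F}_{A_n}\}$. Identifying the first renewal index reaching or passing $T_\epsilon$ as a stopping time for this martingale and invoking Wald's identity gives $E[T_\epsilon-1]\le E[\tau_n]\,E[N_\epsilon]$, and combining with the supermartingale bound produces
\begin{align*}
    E[T_\epsilon-1]\le E[\tau_n]\,\frac{\Phi_0}{h(\Delta_\epsilon)},
\end{align*}
as asserted. (With $a=\log\gamma$, $b=\log\theta$, and $p=3/14$ this is the form used for Algorithm~\ref{SDS}.)

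The hard part will be the final optional-stopping step: one must verify that $N_\epsilon$ is genuinely a stopping time with respect to $\{\mathcal{F}_{A_n}\}$ and that the integrability conditions (finiteness of $E[N_\epsilon]$, established above, together with integrable interarrival increments) hold, so that the Wald martingale may be stopped without residual boundary terms. Intertwined with this is the bookkeeping of the initial epoch $A_0=0$: since $\Delta_0$ need not reach $\Delta_\epsilon$, the first renewal earns no guaranteed reward and must be excluded from $R_k$, and it is precisely this careful accounting of the initial epoch that produces the exact additive constant (the $-1$) in the statement rather than a term scaled by $E[\tau_n]$.
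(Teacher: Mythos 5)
Your overall architecture (a rewarded renewal process plus Wald's identity) is exactly the argument the paper relies on: the paper does not reprove the statement itself but cites \cite[Theorem 2]{blanchet2019convergence} and observes that the proof there is insensitive to the particular constant value of $E[\tau_n]$, which is now supplied by Lemma~\ref{LB2}. Your supermartingale step is sound: $M_k=\Phi_{k\wedge T_\epsilon}+h(\Delta_\epsilon)R_{k\wedge T_\epsilon}$ has nonpositive drift because whether $k$ is a renewal epoch is $\mathcal{F}_k$-measurable and, for $k=A_n<T_\epsilon$ with $n\geq1$, the comparison $\mathbf{1}(k<T_\epsilon)\Delta_k\geq\mathbf{1}(k<T_\epsilon)\Delta_0e^{Z_k}$ gives $\Delta_{A_n}\geq\Delta_\epsilon$; monotone convergence then yields $E[N_\epsilon]\leq\Phi_0/h(\Delta_\epsilon)$.

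The genuine gap is your final step. With $N_\epsilon$ counting only post-initial renewals, the first index $N_\epsilon+1$ with $A_{N_\epsilon+1}\geq T_\epsilon$ is indeed a stopping time, but Wald's identity gives $E[A_{N_\epsilon+1}]=E[\tau_n]\,(E[N_\epsilon]+1)$, hence only $E[T_\epsilon]\leq E[\tau_n]E[N_\epsilon]+E[\tau_n]$, i.e.\ $E[T_\epsilon-1]\leq E[\tau_n]\Phi_0/h(\Delta_\epsilon)+(E[\tau_n]-1)$. Since $\tau_n\geq1$ and Lemma~\ref{LB2} gives $E[\tau_n]=p+(1+E[\bar{\tau}])(1-p)>1$, your claimed implication ``Wald's identity gives $E[T_\epsilon-1]\leq E[\tau_n]E[N_\epsilon]$'' does not follow: the final, incomplete interarrival interval costs a full $E[\tau_n]$, not $1$. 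Moreover, your diagnosis of the initial epoch is backwards: excluding $A_0$ from the reward count is precisely what loses the sharp constant, rather than what produces the $-1$. The inductive comparison $\Delta_k\geq\Delta_0e^{Z_k}$ that you invoke already requires its base case $\Delta_0\geq\Delta_0e^{Z_0}=\Delta_\epsilon$, i.e.\ $Z_0\leq0$, so in the regime where the analysis applies the epoch $A_0=0$ does earn the guaranteed reward $h(\Delta_0)\geq h(\Delta_\epsilon)$ whenever $T_\epsilon>0$ (the bound being trivial when $T_\epsilon=0$). Counting it gives $E[N_\epsilon+1]\leq\Phi_0/h(\Delta_\epsilon)$, and Wald then yields $E[T_\epsilon]\leq E[\tau_n]E[N_\epsilon+1]\leq E[\tau_n]\Phi_0/h(\Delta_\epsilon)$, which implies the stated inequality. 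With that correction your proof closes and coincides with the argument of \cite{blanchet2019convergence} that the paper invokes.
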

where $\Phi_0$ is a given positive number, $E[\tau_n]$ satisfies~(\ref{Beq1}), and $h$ is a given function in Assumption~\ref{BA2}.
\end{appendices}

\end{document}